\newtheorem{theorem}{Theorem}[section]
\newtheorem{lemma}{Lemma}[section]
\newtheorem{corollary}{Corollary}[section]
\theoremstyle{definition}
\newtheorem{definition}{Definition}[section]
\newtheorem{example}{Example}[section]
\theoremstyle{remark}
\newtheorem{remark}{Remark}[section]
\newcommand{\R}{\mathbb{R}}
\newcommand{\DD}{\mathcal{D}}
\newcommand{\PP}{\mathcal{P}}
\newcommand{\CC}{\mathcal{C}}
\newcommand{\LL}{\mathcal{L}}
\newcommand{\HH}{\mathcal{H}}
\newcommand{\VV}{\mathcal{V}}
\newcommand{\n}{\nabla}
\newcommand{\ran}{\rangle}
\newcommand{\lan}{\langle}
\newcommand{\ve}{\varepsilon}
\newcommand{\vp}{\varphi}
\newcommand{\sss}{\sigma}
\DeclareMathOperator{\hess}{Hess}
\DeclareMathOperator{\hesss}{hess}
\DeclareMathOperator{\tr}{trace}
\DeclareMathOperator{\dist}{dist}
\numberwithin{equation}{section}
\title[Half space theorems for the $r$-mean curvature flow]{Half-space theorems for translating solitons of the $r$-mean curvature flow}
\author{Hil\'ario Alencar, G. Pacelli Bessa \and Greg\'orio Silva Neto}
\date{\today}
\address{Instituto de Matem\'atica,
Universidade Federal de Alagoas,
Macei\'o, 57072-900, Brazil}
\email{hilario@mat.ufal.br}
\address{Departamento de Matem\'atica,
Universidade Federal do Ceará,
Fortaleza, 60455-760, Brazil}
\email{bessa@mat.ufc.br}
\address{Instituto de Matem\'atica,
Universidade Federal de Alagoas,
Macei\'o, 57072-900, Brazil}
\email{gregorio@im.ufal.br}
\begin{document}
%\footnotetext{The authors were partially supported by the National Council for Scientific and Technological Development - CNPq of Brazil [Grant: 303118/2022-9 to H. Alencar, 303057/2018-1, 402563/2023-9 to G. P. Bessa and 306189/2023-2 to G. Silva Neto].}

\keywords{Translating solitons,  $r$th-mean curvature flow, half-space theorems}
\subjclass[2020]{53E10, 53C42, 53E40, 53C21}
\begin{abstract}
In this paper, we establish nonexistence results for complete translating solitons of the $r$-mean curvature flow under suitable growth conditions on the $(r-1)$-mean curvature and on the norm of the second fundamental form. We first show that such solitons cannot be entirely contained in the complement of a right rotational cone whose axis of symmetry is aligned with the translation direction. We then relax the growth condition on the $(r-1)$-mean curvature and prove that properly immersed translating solitons cannot be confined to certain half-spaces opposite to the translation direction. We conclude the paper by showing that complete, properly immersed translating solitons satisfying appropriate growth conditions on the $(r-1)$-mean curvature cannot lie completely within the intersection of two transversal vertical half-spaces.
\end{abstract}

\maketitle

\section{Introduction}
Let $X_0:\Sigma^n\to \R^{n+1}$ be an isometric immersion of a $n$-dimensional Riemannian manifold $\Sigma^n,$ with the second fundamental form
\[
II(X,Y)=\lan A(X),Y\ran N,
\]
where $A:T\Sigma^n\to T\Sigma^n$ is its shape operator and $N$ is a unit normal vector field. Letting $k_1,\ldots,k_n$ be the principal curvatures of the immersion, we define the $r$-mean curvatures as
\begin{equation}\label{eq1.1}
 \left\{\begin{aligned}
 \sss_0&=1,\\  \sss_r&=\displaystyle{\sum_{i_1<\cdots<i_r}k_{i_1}\cdots k_{i_r}},\quad \mbox{for}\quad 1\leq r\leq n,\\
 \sss_r&=0,\,\,\mbox{for}\,\, r>n,\\
\end{aligned}\right.
\end{equation}
where $(i_1,\ldots,i_r)\in\{1,\ldots,n\}^r.$ These functions appear naturally in the characteristic polynomial of $A,$ since
\begin{equation}\label{char}
\begin{aligned}
\det(A-tI)&= \sss_n-\sss_{n-1}t+\sss_{n-2}t^2 -\sss_{n-3}t^{3}+\cdots +(-1)^nt^n \\
&= \sum_{j=0}^n(-1)^j\sss_{n-j}t^j. 
\end{aligned}
\end{equation}
A family of isometric immersions $\mathcal{X}:\Sigma^n\times[0,T)\to\R^{n+1}$ is a solution of the $r$-mean curvature flow if it satisfies the initial value problem
\begin{equation}\label{eq-flow}
    \left\{\begin{aligned}
    \dfrac{\partial\mathcal{X}}{\partial t}(x,t)&=\sigma_r(k_1(x,t),\ldots,k_n(x,t))\cdot N(x,t),\\
        \mathcal{X}(x,0)&=X_0(x).\\
    \end{aligned}\right.
\end{equation}
Here, $k_1(x,t),\ldots,k_n(x,t)$ are the principal curvatures of the immersions $X_t:=\,\mathcal{X}(\cdot,t)$ and $N(\cdot,t)$ are their normal vector fields. When $r=1,$ the $r$-mean curvature flow is  called the mean curvature flow. 

The $r$-mean curvature flow has been widely studied in recent decades, as we can see, for instance, in \cite{AS2010}, \cite{AMC2012}, \cite{AW2021}, \cite{BS2018}, \cite{CRS2010}, \cite{Chow1987}, \cite{GLM2018}, \cite{GLW2017}, \cite{LWW2021}, \cite{LSW2020}, \cite{Urbas1990}, \cite{Urbas1999}, and \cite{Z2013}. 

%Among the most significant solutions of \eqref{eq-flow}, are the self-similar ones, which are characterized by the property that, under the action of the flow, the initial immersion $X_0$ is transformed solely by an isometry or a homothety in $\R^{n+1}.$ 

%Among self-similar solutions, the translating solitons, also known as translators, are those for which the flow transforms $X_0$ solely by a translation in $\R^{n+1}$,
Among the most significant solutions of \eqref{eq-flow} are the self-similar ones, in which the initial immersion $X_0$ evolves under the flow only through an isometry or a homothety in $\mathbb{R}^{n+1}$. Translating solitons, also called translators, form the subclass where the evolution consists solely of a translation in $\mathbb{R}^{n+1},$ i.e., 
\[
\mathcal{X}(x,t)=X_0(x)+tV,
\]
where $V\in\R^{n+1}$ is a unit vector, called the velocity vector of the translator. It can be easily proven that if $\Sigma^n\subset\R^{n+1}$ is a translating soliton of the $r$-mean curvature flow, then 
\begin{equation}\label{def-trans}
    \sigma_r(x)=\lan N(x),V\ran.
\end{equation}

A natural tool to study the $r$-mean curvature is the $r$-th Newton transformation $P_r:T\Sigma^n\to T\Sigma^n,$ $0\leq r\leq n-1,$ defined recursively by
\begin{equation}\label{Pr-defi}
\left\{\begin{aligned}
P_0&=I,\\
P_r&=\sigma_rI-AP_{r-1},\quad r\geq 1,\\
\end{aligned}\right.
\end{equation}
where $I:T\Sigma^n\to T\Sigma^n$ is the identity operator. In the context of Differential Geometry, it first appeared in the work of Reilly \cite{Reilly} in the expressions of the variational integral formulas for functions  $f(\sss_0,\ldots, \sss_n)$ of the elementary symmetric functions $\sss_i$'s. Since then, the Newton transformations have been widely used as a tool in the study of the $r$-mean curvature, as we can see, for example, in \cite{ABS-2023}, \cite{ABS-2025}, \cite{AdCE-2003}, \cite{AdCR-1993}, \cite{AIR-2013}, \cite{BX-2024}, \cite{BMR-2013}, \cite{CY-1977}, \cite{HL-1995}, \cite{IMR-2011}, \cite{R-1993}, and \cite{W-1985}. Since we are assuming that $\Sigma^n$ has a global choice of $N$ we have that $P_r$ is globally defined.

\begin{remark}
    Solving the recurrence, we can also write $P_r$ as the polynomial
\begin{equation}\label{Pr-pol}
    P_r = \sss_rI-\sss_{r-1}A+\sss_{r-2}A^2 -\cdots +(-1)^rA^r = \sum_{j=0}^r(-1)^j\sss_{r-j}A^j.
\end{equation}
Notice that the Newton transformation $P_r$ is a ``degree $r$ version'' of the characteristic polynomial \eqref{char} applied to $A.$
\end{remark}
%\begin{remark}\label{trace-Pr}
%The following facts are well known (see Lemma 2.1, p.279 of\cite{BC}):
%\begin{itemize}
%    \item[(i)] $\tr(P_{r-1})=(n-r+1)\sigma_{r-1};$
%    \item[(ii)] $\tr(A\circ P_{r-1})=r\sigma_r.$
%\end{itemize}
%\end{remark}

In this paper, we establish non-existence results for translating solitons of the $r$-mean curvature flow in certain regions of $\R^{n+1}$. We first prove a non-existence theorem for translating solitons of the $r$-mean curvature flow in the complement of the open cone
%\textcolor{red}{We first prove the non-existence theorem for translating solitons of the $r$-mean curvature flow confined in the closed half-space that does not contain the velocity vector $V$.}
\[
\CC_{V,a}=\left\{X\in\R^{n+1}; \,\left\lan \frac{X}{\|X\|},V\right\ran > \,a, \,\,a\in (0,1)\right\}.
\]
\begin{theorem}\label{thm-Cone}
There are no complete, $n$-dimensional, translating solitons of the $r$-mean curvature flow $\Sigma^n\subset\R^{n+1}$ with velocity $V$ with
$P_{r-1}$ positive semidefinite, contained in the complement of the open cone $\CC_{V,a}$,
\[
\left(\CC_{V,a}\right)^{c}=\left\{X\in\R^{n+1}; \,\left\lan \frac{X}{\|X\|},V\right\ran \leq \,a, \,\,a\in (0,1)\right\},
\]
 satisfying one of the following conditions:
\begin{itemize}
    \item[(i)] $\Sigma^n$ is properly immersed and
    \begin{equation}\label{HS2-hyp-1}\limsup_{\delta(x)\to\infty}\frac{\sigma_{r-1}(x)}{\delta(x)}<\frac{r(1-a)}{a(n-r+1)},
    \end{equation}
where $\delta(x)$ denotes the extrinsic distance to a fixed point of $\R^{n+1};$
    \item[(ii)] $\sigma_{r-1}$ is bounded and
    \begin{equation}\label{HS2-hyp-2}\limsup_{\rho(x)\to\infty}\frac{\|A(x)\|}{\rho(x)\log(\rho(x))\log(\log(\rho(x)))}<\infty,
    \end{equation}
    where $\rho(x)$ denotes the intrinsic distance to a fixed point of $\R^{n+1}.$
\end{itemize}
\end{theorem}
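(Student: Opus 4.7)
The plan is to introduce the operator $L_{r-1}f := \tr(P_{r-1}\hess f)$ and exploit two basic identities. Combining the trace relations $\tr P_{r-1}=(n-r+1)\sigma_{r-1}$ and $\tr(P_{r-1}A)=r\sigma_{r}$ (both Newton-type identities applied to \eqref{Pr-pol}) with the soliton equation \eqref{def-trans} yields
\begin{equation*}
L_{r-1}\lan X,V\ran = r\sigma_r^{2}, \qquad L_{r-1}\|X\|^{2} = 2(n-r+1)\sigma_{r-1} + 2r\sigma_r\lan X,N\ran.
\end{equation*}
Using the chain rule for $\sqrt{\cdot}$ and discarding a nonnegative term (thanks to $P_{r-1}\geq 0$), the test function $\phi := \lan X,V\ran - a\|X\|$, which is $\leq 0$ on the cone complement, satisfies the pointwise lower bound
\begin{equation*}
L_{r-1}\phi \;\geq\; r\sigma_r^{2} \;-\; \frac{a(n-r+1)\sigma_{r-1}}{\|X\|} \;-\; \frac{ar\sigma_r\lan X,N\ran}{\|X\|}.
\end{equation*}

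Next, I apply an Omori--Yau-type maximum principle for $L_{r-1}$ to the bounded-above function $\phi$. In case (ii), the intrinsic growth condition on $\|A\|$ is exactly the hypothesis needed to invoke the standard $L_{r-1}$-Omori--Yau principle. In case (i), properness together with \eqref{HS2-hyp-1} permits a Khasminskii-type construction using the exhaustion $\gamma := \log(\|X\|^2+1)$: the computed $L_{r-1}\gamma$ is a sum of terms of the form $O(\sigma_{r-1}/\|X\|^2)+O(1/\|X\|)$, which the hypothesis keeps bounded, promoting this to a full Omori--Yau principle for $L_{r-1}$. Either way, I extract a sequence $\{x_k\}\subset\Sigma$ with $\phi(x_k)\to\sup_\Sigma\phi$, $|\nabla\phi(x_k)|\to 0$, and $L_{r-1}\phi(x_k)\leq 1/k$; the separate case in which $\sup\phi$ is realized at an interior point of bounded extrinsic norm is handled by the strong maximum principle, reducing to the same pointwise inequality.

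Since $\nabla\phi = V^{T}-aX^{T}/\|X\|$, the gradient condition forces $V^{T}(x_k) - aX^{T}(x_k)/\|X(x_k)\|\to 0$. Combined with the decomposition $V=V^{T}+\sigma_rN$ (so $|V^T|^2+\sigma_r^2=1$) and the identity $\lan X,V\ran=\lan X^{T},V^{T}\ran+\sigma_r\lan X,N\ran$, together with $\phi(x_k)/\|X(x_k)\|\to 0$ (as $\phi$ is bounded above and $\|X(x_k)\|\to\infty$), one obtains
\begin{equation*}
\sigma_r(x_k)^{2} \to 1 - a^{2}\frac{\|X^{T}\|^{2}}{\|X\|^{2}}(x_k), \qquad \frac{\sigma_r\lan X,N\ran}{\|X\|}(x_k) \to a\frac{\lan X,N\ran^{2}}{\|X\|^{2}}(x_k).
\end{equation*}
Substituting into the lower bound and using $\|X^{T}\|^{2}+\lan X,N\ran^{2}=\|X\|^{2}$ to cancel the $\|X^T\|^2/\|X\|^2$ dependence, the bound simplifies to
\begin{equation*}
L_{r-1}\phi(x_k)\;\geq\; r(1-a^{2}) - \frac{a(n-r+1)\sigma_{r-1}(x_k)}{\|X(x_k)\|} + o(1).
\end{equation*}

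In case (i), the hypothesis \eqref{HS2-hyp-1} bounds the subtracted term by $r(1-a)-\eta$ along $x_k$ for some $\eta>0$, so
\[
L_{r-1}\phi(x_k) \;\geq\; r(1-a)(1+a) - r(1-a) + \eta + o(1) \;=\; ra(1-a) + \eta + o(1).
\]
In case (ii), boundedness of $\sigma_{r-1}$ together with $\|X(x_k)\|\to\infty$ kills the subtracted term, leaving $L_{r-1}\phi(x_k)\geq r(1-a^2)+o(1)$. Either lower bound is strictly positive in the limit and contradicts $L_{r-1}\phi(x_k)\leq 1/k$. The main obstacle will be validating the $L_{r-1}$-Omori--Yau maximum principle in case (i), where only extrinsic hypotheses are available: the Khasminskii construction must be adapted to the possibly degenerate operator $L_{r-1}$, and one must ensure that the resulting principle still delivers the full gradient control $|\nabla\phi(x_k)|\to 0$ on which the sharp cancellation above depends. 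The constant $r(1-a)/(a(n-r+1))$ is calibrated precisely so that, after this cancellation, a strictly positive residual remains.
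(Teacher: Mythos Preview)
Your approach is essentially the paper's: same test function $\phi=\langle X,V\rangle-a\|X\|$, same operator $L_{r-1}$, and an Omori--Yau principle for $L_{r-1}$. Your algebraic treatment of the key inequality is in fact slicker than the paper's. By tracking the \emph{sign} of $\sigma_r\langle X,N\rangle$ via $\langle X,V\rangle=\langle X^{T},V^{T}\rangle+\sigma_r\langle X,N\rangle$ and using $\phi\le 0$ (note: $\phi\le 0$ already gives $-ar\phi/\|X\|\ge 0$, so you do not actually need $\phi/\|X\|\to 0$), you obtain directly the lower bound $r(1-a^{2})$. The paper instead passes to the absolute value $|\sigma_r\langle X/\|X\|,N\rangle|$ and must introduce the auxiliary function $\alpha(t)=t-\sqrt{t^{2}-(1-a^{2})t}$ to extract only the weaker bound $r(1-a)$. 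Both suffice for the stated constant $r(1-a)/(a(n-r+1))$.

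There is, however, a genuine gap in case~(ii). You assert $\|X(x_k)\|\to\infty$, but under~(ii) the immersion is \emph{not} assumed proper, so even if the Omori--Yau sequence leaves every compact subset of $\Sigma$ there is no reason the extrinsic norm must diverge. Your fallback---``the separate case in which $\sup\phi$ is realized at an interior point \ldots\ is handled by the strong maximum principle, reducing to the same pointwise inequality''---does not close the gap: that pointwise inequality still contains the term $a(n-r+1)\sigma_{r-1}/\|X\|$, which you have no way to make small at a single interior maximum. The paper supplies the missing idea: the cone complement $(\CC_{V,a})^{c}$ and the translator equation are both invariant under $X\mapsto X-tV$ for $t>0$, while on $(\CC_{V,a})^{c}$ one has $\|X-tV\|^{2}\ge(\|X\|-ta)^{2}+t^{2}(1-a^{2})\ge t^{2}(1-a^{2})$. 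Since $\sigma_{r-1}$ is bounded in case~(ii), one first translates so that $\inf_{\Sigma}\|X\|\ge 2a(n-r+1)\sup_{\Sigma}\sigma_{r-1}/(r(1-a))$; then $a(n-r+1)\sigma_{r-1}/\|X\|$ is uniformly small and the contradiction follows with no assumption on where the $x_k$ lie. You should incorporate this translation trick; without it your argument in case~(ii) is incomplete.
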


If $r=1,$ then $\sigma_{r-1}=\sigma_0=1$ and $P_{r-1}=P_0=I$ (that is positive definite). Moreover, \eqref{HS2-hyp-1} is automatically satisfied. Therefore, we immediately obtain:

\begin{corollary}\label{KP2}
    There are no complete, properly immersed, $n$-dimensional, translating solitons of the mean curvature flow $\Sigma^n\subset\R^{n+1},$ with velocity $V,$ contained in the complement of the open cone $\CC_{V,a}$,
\[\left(\CC_{V,a}\right)^{c} = \left\{ X \in \mathbb{R}^{n+1};\left\langle \frac{X}{\|X\|}, V \right\rangle \leq a,\,\, a\in (0,1) \right\}.\]
\end{corollary}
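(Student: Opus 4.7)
The plan is to obtain the corollary as a direct specialization of Theorem~\ref{thm-Cone}(i) to the case $r=1$, checking that each hypothesis either trivializes or is automatic in this setting.

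First, I would identify what each ingredient of the theorem becomes when $r=1$. The Newton transformation reduces to $P_{r-1}=P_{0}=I$ via the recursion \eqref{Pr-defi}, which is trivially positive semidefinite (in fact, positive definite), so the restriction on $P_{r-1}$ is fulfilled regardless of the geometry of $\Sigma^{n}$. Likewise, by the definition \eqref{eq1.1}, $\sigma_{r-1}=\sigma_{0}\equiv 1$, so the $(r-1)$-mean curvature is the constant function $1$ and there is nothing to control about its growth.

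Next, I would verify the growth bound \eqref{HS2-hyp-1}. Substituting $r=1$ and $\sigma_{0}\equiv 1$, the inequality becomes
\[
\limsup_{\delta(x)\to\infty}\frac{1}{\delta(x)} \,<\, \frac{1-a}{a\,n},
\]
whose left-hand side equals $0$ (or is vacuous, if $\Sigma^{n}$ is bounded so that the condition $\delta(x)\to\infty$ is never met) while the right-hand side is strictly positive, since $a\in(0,1)$. Hence \eqref{HS2-hyp-1} is automatically satisfied. Together with the given hypotheses that $\Sigma^{n}$ is complete and properly immersed with velocity $V$, all the assumptions of Theorem~\ref{thm-Cone}(i) are now in force, and the nonexistence conclusion for $\Sigma^{n}\subset(\CC_{V,a})^{c}$ transfers immediately.

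There is no substantive obstacle to overcome here beyond what was already handled in the proof of Theorem~\ref{thm-Cone}; the role of the corollary is precisely to record that, when $r=1$, the Newton-transformation sign condition and the technical growth bound on $\sigma_{r-1}$ disappear simultaneously, yielding a clean nonexistence statement for translating solitons of the classical mean curvature flow.
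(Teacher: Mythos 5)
Your proposal is correct and follows exactly the paper's route: the corollary is obtained by specializing Theorem~\ref{thm-Cone}(i) to $r=1$, observing that $P_{0}=I$ is positive definite, $\sigma_{0}\equiv 1$, and that \eqref{HS2-hyp-1} reduces to $0<\frac{1-a}{an}$, which holds automatically. Nothing further is needed.
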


\begin{remark}\label{rem-change-hyp}
Clearly, condition (ii) in Theorem \ref{thm-Cone} can be used to replace the hypothesis of being properly immersed in Corollary \ref{KP2} by the control of the second fundamental form given by \eqref{HS2-hyp-2}, since, for $r=1,$ $\sigma_{r-1}=1$.
\end{remark}

\begin{remark}\label{control}
  Notice that the conditions given in Equations \eqref{HS2-hyp-1} and \eqref{HS2-hyp-2} are equivalent to the existence of positive constants $C,D>0$ such that
    \[
    \sigma_{r-1}(x)<\frac{r(1-a)}{a(n-r+1)}[\delta(x)+C] 
    \]
    and
    \[\|A(x)\|\leq D\rho(x)\log(\rho(x))\log(\log(\rho(x))),
    \]
    for $\delta(x)\gg1$ and $\rho(x)\gg1$ respectively.
\end{remark}

If we restrict the region $(\CC_{V,a})^c$ to a halfspace of the form
\[
\mathcal{H}_W=\left\{X\in\R^{n+1}; \lan X,W\ran \leq 0,\ \lan V,W\ran>0,\ \|W\|=1\right\},
\] 
that is always contained in $(\CC_{V,a})^c$ for any $a\in(0,1)$ such that $a\geq\lan V,W\ran,$ we can improve the hypothesis \eqref{HS2-hyp-1} in the case that $\Sigma^n$ is properly immersed. This is the content of the next

\begin{theorem}\label{thm-HS-1}
There is no complete, $n$-dimensional, properly immersed, translating soliton of the $r$-mean curvature flow $\Sigma^n\subset\R^{n+1},$ with velocity $V$, $P_{r-1}$ positive semidefinite, contained in the closed half-space 
\[
\mathcal{H}_W=\left\{X\in\R^{n+1}; \lan X,W\ran \leq 0,\ \lan V,W\ran>0,\ \|W\|=1\right\},
\] 
and such that
    \begin{equation}\label{HS1-hyp-1}
\limsup_{\delta(x)\to\infty}\frac{\sigma_{r-1}(x)}{[\delta(x)]^2\log(\delta(x))\log(\log(\delta(x)))}<\infty,
    \end{equation}
   where $\delta(x)$ denotes the extrinsic distance to a fixed point of $\R^{n+1}$.
\end{theorem}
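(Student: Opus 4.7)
The plan is to apply an Omori--Yau type maximum principle for the degenerate elliptic operator $L_{r-1}:=\tr(P_{r-1}\circ\hess)$ to the height function $h_W(x):=\lan X(x),W\ran$, which is bounded above by $0$ on $\Sigma^n$ by hypothesis.

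First I will compute $L_{r-1}h_W$ by a direct calculation. For any $W\in\R^{n+1}$, the tangential gradient of $h_W$ is $\n h_W=W^T=W-\lan W,N\ran N$, so $|\n h_W|^2=1-\lan W,N\ran^2$, and the Hessian satisfies $\hess h_W(X,Y)=\lan W,N\ran\lan AX,Y\ran$. Using the algebraic identity $\tr(P_{r-1}A)=r\sss_r$, which follows from the recursion \eqref{Pr-defi} together with $\tr(P_r)=(n-r)\sss_r$, and the soliton equation \eqref{def-trans}, I obtain the key formula
\begin{equation*}
L_{r-1}h_W=r\sss_r\lan N,W\ran=r\lan V,N\ran\lan W,N\ran,
\end{equation*}
whose crucial feature is that it is invariant under the change $N\mapsto -N$.

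Next I will invoke a full Omori--Yau maximum principle for $L_{r-1}$. Because $P_{r-1}\geq 0$, $\Sigma^n$ is complete and properly immersed in $\R^{n+1}$, and $\tr(P_{r-1})=(n-r+1)\sss_{r-1}$ satisfies the quadratic--iterated-logarithm extrinsic growth bound \eqref{HS1-hyp-1}, the machinery of Pigola--Rigoli--Setti and Al\'ias--Impera--Rigoli furnishes a sequence $\{x_k\}\subset\Sigma^n$ along which
\begin{equation*}
h_W(x_k)\to\sup_{\Sigma^n}h_W,\qquad |\n h_W|(x_k)\to 0,\qquad \limsup_{k\to\infty}L_{r-1}h_W(x_k)\leq 0.
\end{equation*}
From $|\n h_W|^2(x_k)=1-\lan W,N(x_k)\ran^2\to 0$ I extract a subsequence along which $\lan W,N(x_k)\ran\to\ve$ for some $\ve\in\{-1,+1\}$, which forces $N(x_k)\to\ve W$ in the unit sphere. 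Substituting into the key formula gives
\begin{equation*}
L_{r-1}h_W(x_k)\longrightarrow r(\ve\lan V,W\ran)\ve=r\lan V,W\ran>0,
\end{equation*}
regardless of the sign $\ve$, contradicting the Omori--Yau bound and ruling out the existence of $\Sigma^n$.

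The main obstacle is the second step: rigorously securing the \emph{full} Omori--Yau principle (with gradient control) for the degenerate elliptic operator $L_{r-1}$, whose trace $(n-r+1)\sss_{r-1}$ is not assumed bounded and whose symbol is only positive semidefinite. The quadratic--iterated-logarithm rate in \eqref{HS1-hyp-1} is exactly the sharp one for constructing a Khas'minskii-type test function in terms of the extrinsic distance $\delta$; verifying this, and that properness of the immersion bridges the extrinsic control to the intrinsic completeness underlying such principles, is the technical heart of the argument.
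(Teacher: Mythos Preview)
Your proposal is correct and follows essentially the same approach as the paper: apply the Omori--Yau maximum principle for $L_{r-1}$ (the paper's Theorem~\ref{theo-max-princ-Lr}, whose hypothesis on $\sigma_r$ holds automatically since $|\sigma_r|\le 1$ for translators) to the height function $\psi=\langle X,W\rangle$, use the gradient condition to force $\langle N(x_k),W\rangle^2\to 1$, and derive $L_{r-1}\psi(x_k)\to r\langle V,W\rangle>0$. Your final step, observing directly that $N(x_k)\to\varepsilon W$ on the sphere and hence $\langle V,N(x_k)\rangle\langle W,N(x_k)\rangle\to\varepsilon^2\langle V,W\rangle$, is a slightly cleaner variant of the paper's computation, which instead expands $W$ in a tangent frame plus $N$ and shows the tangential contribution vanishes.
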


\begin{remark}\label{control-2}
    Notice that the condition given in Equation \eqref{HS1-hyp-1} is equivalent to the existence of a positive constant $D>0$ such that
    \[
    \sigma_{r-1}(x)\leq D[\delta(x)]^2\log(\delta(x))\log(\log(\delta(x))) 
    \]
    for $\delta(x)\gg1.$ This growth condition for $\sigma_{r-1}(x)$ is clearly better than that given by Equation \eqref{HS2-hyp-1} in Theorem \ref{thm-Cone}.
\end{remark}

If $r=1,$ then $\sigma_{r-1}=\sigma_0=1$ and \eqref{HS1-hyp-1} is automatically satisfied. Thus, we obtain the following

\begin{corollary}\label{KP1}%[Kim and Pyo, \cite{KP}, Theorem 1.1, revised]
    There are no complete, properly immersed, $n$-dimensional, translating solitons of the mean curvature flow $\Sigma^n\subset\R^{n+1},$ with velocity $V,$ in a closed half-space 
\[
\mathcal{H}_W=\left\{X\in\R^{n+1}; \lan X,W\ran \leq 0,\ \lan V,W\ran>0,\ \|W\|=1\right\}.
\] 
\end{corollary}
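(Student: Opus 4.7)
The plan is to deduce Corollary \ref{KP1} directly from Theorem \ref{thm-HS-1} by specializing to $r=1$ and checking that every hypothesis of the theorem becomes automatic in this regime. Since the corollary is stated as an immediate consequence, the work is purely a verification that the three conditions in the theorem (positive semidefiniteness of $P_{r-1}$, the growth bound on $\sigma_{r-1}$, and confinement in $\mathcal{H}_W$) either trivialize or are already hypothesized when $r=1$.

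First I would observe that when $r=1$ the Newton transformation $P_{r-1}=P_0$ equals the identity $I$ by the recursive definition \eqref{Pr-defi}, so $P_{r-1}$ is (strictly) positive definite and, in particular, positive semidefinite. Next I would verify the growth condition \eqref{HS1-hyp-1}. Still with $r=1$, definition \eqref{eq1.1} gives $\sigma_{r-1}=\sigma_0\equiv 1$, so
\[
\limsup_{\delta(x)\to\infty}\frac{\sigma_{r-1}(x)}{\delta(x)^2\log(\delta(x))\log(\log(\delta(x)))}=0<\infty,
\]
and thus \eqref{HS1-hyp-1} is satisfied trivially. Moreover, a complete, properly immersed translating soliton of the mean curvature flow with velocity $V$ is precisely a translating soliton of the $r$-mean curvature flow with $r=1$ and the same velocity, as can be read off from \eqref{def-trans} with $r=1$.

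Therefore, assuming for contradiction that such a soliton $\Sigma^n$ is contained in the closed half-space $\mathcal{H}_W$, all hypotheses of Theorem \ref{thm-HS-1} are met, and the theorem yields a contradiction, which proves the corollary. No genuine obstacle is anticipated: the entire argument is a one-line specialization of Theorem \ref{thm-HS-1}, with the only mild point being to record that $\sigma_0\equiv 1$ and $P_0=I$ trivialize the quantitative assumptions. All the analytic content is already contained in the proof of Theorem \ref{thm-HS-1}.
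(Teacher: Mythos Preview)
Your proposal is correct and matches the paper's own argument exactly: the corollary is obtained from Theorem~\ref{thm-HS-1} by setting $r=1$, noting that $P_0=I$ is positive semidefinite and that $\sigma_0\equiv 1$ makes \eqref{HS1-hyp-1} automatic.
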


%\textcolor{red}{\begin{remark}
%Analogously, as we observed in Remark \ref{rem-change-hyp}, we can also replace the hypothesis of being proper in Corollary \ref{KP2} by the control of the second fundamental form given by \eqref{HS2-hyp-2}. 
%\end{remark}
%}

\begin{remark}
Corollaries \ref{KP1} and \ref{KP2} are stated in \cite{KP} without the hypothesis that $\Sigma^n$ is properly immersed. Unfortunately, there was an error in their proof. In order to apply the Omori–Yau maximum principle, the authors claimed that translating solitons of the mean curvature flow have Ricci curvature bounded below by $-1/4$ (Equation (3.1), p. 5), but, in their argument, the sign in the Gauss equation is reversed, leading to an incorrect estimate.
\end{remark}

\begin{example}\label{Ex-r1}
    For $r=1$, the Grim Reaper cylinder, the bowl soliton, and the translating catenoids are examples of properly immersed translating solitons that are not contained in $\mathcal{H}_W$ or  $\left(\CC_{V,a}\right)^{c}$ for $V=E_{n+1}$. Moreover, this property persists under any translation in the direction of $V$, i.e., for $\Sigma^n+tV$, $t\in\mathbb{R}$. For the Grim Reaper cylinder, this follows from the fact that the curve $y=-\log(\cos x)$ has two vertical asymptotes, implying that its graph intersects every rotational cone with vertex at the origin and axis $E_{n+1}$. By the same reason, no vertical translation in the $E_{n+1}$-direction can place it entirely within any $\mathcal{H}_W$. On the other hand, one can always find a suitable translation such that these hypersurfaces lie in the complements $\mathbb{R}^{n+1}\setminus\mathcal{H}_W$ and $\CC_{V,a}$ (see Figure \ref{fig:2}). The same phenomenon occurs for the bowl soliton and the translating catenoids, whose asymptotic expansion as $R(x)$ approaches infinity is
    \[
    \frac{R(x)^2}{2(n-1)}-\log(R(x))+O(R(x)^{-1}),
    \]
    where $R(x)$ denotes the Euclidean distance from $x\in\R^n$ to the origin (remember these hypersurfaces are radial graphs over a subset of $\R^n$).
    \begin{figure}[ht]
\centering
\def\r{5}
	\begin{tikzpicture}[scale=1,>=stealth]
		\def\u{8}
		\def\v{5}
		\coordinate (p1) at (60:8cm);
		\coordinate (p2) at (120:8cm);
		\clip (-4.5,5)--(-4.5,-2)--(4.5,-2)--(4.5,5)--cycle;
		\fill[black!10] (p1) -- (0,0) -- (p2) -- cycle;
		\fill[black!10] (-5,6.9)--(-5,-5)--(5,-5)--(5,6.9)--cycle;
		\fill[white] (p1)--(0,0)--(p2);
		\fill[black!50,draw] (135:1.5*\v cm) -- (0,0) -- (-45:\v cm)  -- (-135:1.5*\v cm)-- cycle;;
		
%		\draw[fill=white] (4,0) circle (2cm) node[below right]{$R/a$};
%		\fill (4,0) circle (2pt);
%		\node (k1) at (13:4.3cm) {$\mathcal{D}(R)$};		
%		\node (k2) at (-13:4cm) {$R$};
%		\node (k3) at (4:5cm) {$\mathcal{L}(R)$};
%		\node[above] (k4) at (30:4cm) {$\mathcal{P}_1$};
%		\node[below] (k5) at (-30:4cm) {$\mathcal{P}_2$};
		
		\path[draw,-{Stealth[length=2mm]}] (-0.6*\u,0) -- (0.6*\u,0) node[pos=1, below] {};
		\path[draw,-{Stealth[length=2mm]}] (0,-\v) -- (0,0.9*\v) node[pos=1, right] {$V=E_{n+1}$};
		
		\path[draw,dashed] (0.4*\v,-\v) -- (0.4*\v,\v);
		\path[draw,dashed] (-0.4*\v,-\v) -- (-0.4*\v,\v);
		
		\node[below left] (k1) at (0.4*\v,0) {$\dfrac{\pi}{2}$};
		\node[below left] (k2) at (-0.4*\v,0) {$-\dfrac{\pi}{2}$};
		\node[] (k3) at (-0.8*\v,1) {$\mathcal{H}_{W}$};
		\node[] (k4) at (0.7*\v,1) {$\left(\CC_{V,a}\right)^{c}$};

		\draw (0,0) to[out=0,in=270] (0.4*\v,\v);
		\draw (0,0) to[out=180,in=270] (-0.4*\v,\v);
		
%		\draw[->,shorten <=12pt,shorten >=3pt] (4:5cm)to[out=180,in=60] (4,0);
		
		\def\t{2}
		\coordinate (w1) at (40:1*\t cm);
		\coordinate (w2) at (-60:1.6*\t cm);
		
		\path[draw,-{Stealth[length=2mm]}] (60:0) -- (w1) node[pos=1, above] {$W$};
		
%		\path[draw,-{Stealth[length=2mm]}] (-60:-0.6*\t cm) -- (w2) node[pos=1, below] {$W_{2}$};
		
%		\draw[dashed,draw=black] (0,0) -- (0,0|-w2) node[pos=1, left] {$-b$};
%		\draw[dashed] (0,0) -- (0,0|-w1) node[pos=1, left] {$-b$};
		
		\draw (p1) -- (0,0) (p2) -- (0,0);
		
%		\draw[dashed] (w1) -- (0,0|-w1) (w1) -- (w1|-0,0) node[below right] {$a$};
		
%		\draw[dashed] (w2) -- (0,0|-w2) (w2) -- (w2|-0,0);
		
	\end{tikzpicture}
     \caption{The Grim Reaper cylinder and the sets $\mathcal{H}_{V}$ and $\left( \CC_{V,a}\right)^{c}.$}
    \label{fig:2}
    
    \end{figure}
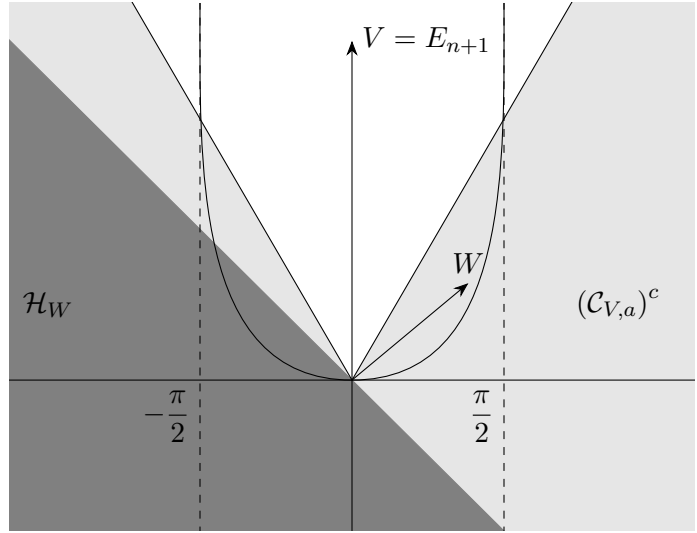

   % \begin{figure}[ht]
      %  \centering        \includegraphics[width=0.7\linewidth]{grim-reaper.png}
      %  \caption{The Grim Reaper cylinder and the sets $\mathcal{H}_W$ and $\left(\CC_{V,a}\right)^{c}.
      %  $}
    %    \label{grim-reaper}
   % \end{figure}
\end{example}

\begin{example}\label{Ex-r2}
    If $r>1$, the $r$-bowl soliton and the $r$-translating catenoids classified by R. de Lima and G. Pipoli in \cite{LP} share the same properties as their counterparts for $r=1.$ Indeed, they are properly embedded (despite, for $r>1,$ the $r$-translating catenoids are not complete), and neither they nor any of their translations in the direction of $V=E_{n+1}$ are contained in any $\mathcal{H}_W$ or $(\CC_{V,a})^c$. Moreover, one can always find a suitable translation such that these hypersurfaces lie in the complements $\mathbb{R}^{n+1}\setminus\mathcal{H}_W$ and $\CC_{V,a}$. In fact, de Lima and Pipoli proved that the angle function $\Theta=\langle N,E_{n+1}\rangle$ converges to $1$ as the Euclidean distance $R(x)$ from $x\in\mathbb{R}^n$ to the origin tends to infinity. It follows that these hypersurfaces intersect every rotational cone with fixed angle, and thus cannot be contained in any $\left(\CC_{V,a}\right)^{c}$ for $V=E_{n+1}$.
\end{example}

In order to introduce our next result, we need the following
\begin{definition}
    Let $\Sigma^n\subset\R^{n+1}$ be a translating soliton of the $r$-mean curvature flow with velocity vector $V$, this is, $\sigma_r=\lan N,V\ran,$ where $N$ is the normal vector field of $\Sigma^n$.
    \begin{itemize}
        \item[(i)] Given $B,W\in\R^{n+1},$ we say that a halfspace 
    \[
    \HH:=\HH_{(B,W)}:=\{X\in\R^{n+1};\lan X-B,W\ran\geq0\},
    \]
    is \emph{vertical} if $W\perp V;$ 
        \item[(ii)] Two halfspaces 
        \[
    \HH_i:=\HH_{(B_i,W_i)}:=\{X\in\R^{n+1};\lan X-B_i,W_i\ran\geq0\},\quad i=1,2,
        \]
        are \emph{transversal} if $W_1$ and $W_2$ are linearly independent.
    \end{itemize}
\end{definition}
We conclude this paper  proving a nonexistence result for translating solitons of the $r$-mean curvature flow in  intersection of two vertical half-spaces. This result is a generalization of the \textquotedblleft bi-halfspace\textquotedblright \,\,Theorem 1.1 of \cite{CM} by Chini and M\o ller.

\begin{theorem}\label{thm-HS-2}
    There is no complete, properly immersed, $n$-dimensional, translating soliton of the $r$-mean curvature flow $\Sigma^n\subset\R^{n+1}$, contained in the intersection of two transversal vertical half-spaces, such that $P_{r-1}\geq \ve I,$ for some $\ve>0,$ and 
    \begin{equation}\label{EquationCM}
\limsup_{\delta(x)\to\infty}\frac{\sigma_{r-1}(x)}{[\delta(x)]^2\log(\delta(x))\log(\log(\delta(x)))}<\infty,
    \end{equation}
    where $\delta:\Sigma^n\to\R_+$ is the extrinsic distance to a fixed point. \end{theorem}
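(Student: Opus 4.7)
The plan is to introduce the natural drift operator adapted to translating solitons of the $r$-mean curvature flow, show that the two linear functions perpendicular to the bounding vertical hyperplanes are harmonic for it, and then use the growth condition \eqref{EquationCM} to force parabolicity, which will clash with the transversality hypothesis.

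On $\Sigma^n$ define
\[
\mathcal{L}_{r-1}u := \tr(P_{r-1}\hess u) + r\lan V,\nabla u\ran,
\]
which is uniformly elliptic since $P_{r-1}\geq \ve I$. Write the two half-spaces as $\HH_i=\{X:\lan X-B_i,W_i\ran\geq 0\}$ with unit $W_i\perp V$, and set $h_i(X):=\lan X-B_i,W_i\ran\geq 0$ on $\Sigma^n$. Using $\hess_\Sigma h_i(Y,Z)=\lan W_i,N\ran\lan AY,Z\ran$, the Newton identity $\tr(P_{r-1}A)=r\sigma_r$, the soliton relation $\sigma_r=\lan N,V\ran$, and $\lan W_i,V\ran=0$, one obtains
\[
\mathcal{L}_{r-1}h_i = r\sigma_r\lan W_i,N\ran - r\sigma_r\lan W_i,N\ran = 0,
\]
so $h_1,h_2$ are nonnegative $\mathcal{L}_{r-1}$-harmonic functions on $\Sigma^n$.

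An analogous computation with the extrinsic distance $\delta(X)=|X-p_0|$, using $\hess_\Sigma\delta^2(Y,Z)=2\lan Y,Z\ran+2\lan X-p_0,N\ran\lan AY,Z\ran$, produces
\[
\mathcal{L}_{r-1}(\delta^2)=2(n-r+1)\sigma_{r-1}+2r\lan V,X-p_0\ran,
\]
the normal terms coming from $\tr(P_{r-1}A)$ and from the tangential projection in $\nabla\delta^2$ cancelling exactly against the drift (this is precisely why $r\lan V,\nabla\cdot\ran$ is the correct drift to add). Combined with \eqref{EquationCM} and $|\lan V,X-p_0\ran|\leq\delta$, this yields $\mathcal{L}_{r-1}(\delta^2)\leq C\delta^2\log\delta\log\log\delta$ off a compact set. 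Properness of the immersion makes $\delta^2$ proper on $\Sigma^n$, and $\int^{\infty}\tfrac{ds}{s\log s\log\log s}=\infty$; thus the generalized Khas'minskii test (in the drifted formulation of Pigola-Rigoli-Setti / Al\'ias-Mastrolia-Rigoli) applies to the uniformly elliptic $\mathcal{L}_{r-1}$ and shows that $\Sigma^n$ is $\mathcal{L}_{r-1}$-parabolic. By parabolicity every bounded-above $\mathcal{L}_{r-1}$-subharmonic function on $\Sigma^n$ is constant; applied to $-h_i\leq 0$ (subharmonic because $h_i$ is harmonic), this forces $h_1\equiv c_1$ and $h_2\equiv c_2$. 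Hence the image of $\Sigma^n$ lies in $\{h_1=c_1\}\cap\{h_2=c_2\}$, an affine subspace of dimension $n-1$ by the linear independence of $W_1,W_2$, which is impossible for an $n$-dimensional immersed submanifold of $\R^{n+1}$.

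The main obstacle is pinning down the Khas'minskii step rigorously for the \emph{non}-symmetric operator $\mathcal{L}_{r-1}$, since many published integral tests are phrased for divergence-form operators without drift: one either invokes the drifted version available in Al\'ias-Mastrolia-Rigoli, or replaces $\delta^2$ by $F(\delta^2)$ with $F$ concave and proper, chosen by an ODE argument so that the sharper inequality $\mathcal{L}_{r-1}F(\delta^2)\leq 0$ holds off a compact set; the exponent in \eqref{EquationCM} is precisely the borderline growth for which such an $F$ still exists. The remaining steps are clean consequences of the soliton identity, the Newton identities for $P_{r-1}$, and elementary linear algebra in the $(W_1,W_2)$-wedge.
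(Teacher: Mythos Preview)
Your computations are clean and correct: with the drift $r\lan V,\nabla\cdot\ran$ added to $L_{r-1}$, the linear functions $h_i=\lan X-B_i,W_i\ran$ are indeed $\mathcal{L}_{r-1}$-harmonic, and $\mathcal{L}_{r-1}(\delta^2)=2(n-r+1)\sigma_{r-1}+2r\lan V,X-p_0\ran$ is exactly right. The gap is the parabolicity step. The growth bound $\mathcal{L}_{r-1}(\delta^2)\leq C\delta^2\log\delta\log\log\delta$ together with properness yields the Omori--Yau maximum principle for $\mathcal{L}_{r-1}$ (this is precisely Lemma~\ref{BL}/Theorem~\ref{theo-max-princ-Lr} in the paper), but it does \emph{not} yield $\mathcal{L}_{r-1}$-parabolicity, and no Khas'minskii-type test in Pigola--Rigoli--Setti or Al\'ias--Mastrolia--Rigoli gives parabolicity from an upper bound of this order alone. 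A Khas'minskii potential would require $\mathcal{L}_{r-1}\gamma\leq 0$ off a compact set, and your suggested fix $\gamma=F(\delta^2)$ fails because $\lan P_{r-1}\nabla\delta^2,\nabla\delta^2\ran$ has no useful lower bound (the normal component of $X-p_0$ is uncontrolled).

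There is in fact a concrete counterexample to the parabolicity claim. Take $r=1$ and let $\Sigma$ be the grim reaper cylinder $\Gamma\times\R^{n-1}\subset\R^{n+1}$ with $V=E_{n+1}$. It is complete, properly embedded, has $P_0=I$ and $\sigma_0=1$, so every hypothesis of the theorem except the wedge containment holds. Yet the function $h(X)=\lan X,E_1\ran$ is, by your own computation, $\mathcal{L}_0$-harmonic, bounded (it takes values in $(-\pi/2,\pi/2)$), and non-constant. Hence $\Sigma$ is not $\mathcal{L}_0$-parabolic, and your argument breaks precisely at the point where you need bounded harmonic functions to be constant. The paper sidesteps this entirely: instead of trying to force the $h_i$ to be constant, it applies the Omori--Yau principle to the distance $d_R$ from the edge $\mathcal{L}(R)=\partial\HH_1(R)\cap\partial\HH_2(R)$, which is bounded on the relevant region $\VV_R$ and whose $L_{r-1}$ carries a strictly positive contribution $\ve(1-\lan N,\chi\ran^2)/d_R$ coming from the ambient Hessian of $d_R$ and the hypothesis $P_{r-1}\geq\ve I$. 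Along the Omori--Yau sequence this term survives with a uniform lower bound $\ve a/R$, while the second term $r\sigma_r\lan N,\overline{\nabla}d_R\ran$ is shown to vanish, giving the contradiction directly.
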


\begin{remark}
    We say that $P_{r-1}\geq\ve I,$ for some $\ve>0,$ if $\lan P_{r-1}(v),v\ran\geq\ve\|v\|^2,$ for any $v\in T\Sigma^n.$
\end{remark}

In Theorem \ref{thm-HS-2}, if we take $r=1$, then $\sigma_0=1$ and $P_0=I$. Moreover, \eqref{EquationCM} is automatically satisfied. Therefore, we immediately obtain:

\begin{corollary}[Chini and M\o ller \cite{CM}] There does not exist any complete, properly immersed, 
$n$-dimensional translating soliton of the mean curvature flow, $\Sigma^n\subset\R^{n+1}$ that is contained in the intersection of two transversal vertical half-spaces.
\end{corollary}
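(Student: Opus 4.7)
The plan is to observe that the Corollary is an immediate specialization of Theorem \ref{thm-HS-2} to $r = 1$. When $r = 1$, the Newton transformation reduces to $P_{0} = I$, which trivially satisfies $P_{0} \geq 1 \cdot I$, and $\sigma_{0} \equiv 1$ makes the growth bound \eqref{EquationCM} automatic. Thus the hypotheses of Theorem \ref{thm-HS-2} degenerate to exactly ``$\Sigma^n$ is complete, properly immersed, and contained in the intersection of two transversal vertical half-spaces,'' and the Corollary follows by invocation. The body of the work is therefore already contained in Theorem \ref{thm-HS-2}; the task for the Corollary is solely to verify that the two structural hypotheses ($P_{r-1}\ge\varepsilon I$ and the $\sigma_{r-1}$-growth condition) collapse in the $r=1$ case.

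To see which mechanism in Theorem \ref{thm-HS-2} actually powers the $r = 1$ case, the key identity is that for an MCF translator the drift Laplacian $\mathcal{L}_V := \Delta + \langle V, \nabla(\cdot)\rangle$ satisfies $\mathcal{L}_V \langle X, W\rangle = \langle V, W\rangle$ for every fixed $W \in \R^{n+1}$; this follows from $\Delta X = \sigma_{1} N$ combined with the translator equation $\sigma_{1} = \langle N, V\rangle$. In particular, linear functions $\langle X, W\rangle$ with $W \perp V$ are $\mathcal{L}_V$-harmonic on $\Sigma$. If $\Sigma \subset \mathcal{H}_{1} \cap \mathcal{H}_{2}$ with both $W_i \perp V$ (the vertical half-space condition), one obtains two non-negative $\mathcal{L}_V$-harmonic functions $u_i := \langle X - B_i, W_i\rangle \geq 0$ on $\Sigma$.

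Applying an Omori--Yau type maximum principle for $\mathcal{L}_V$ on the properly immersed translator $\Sigma$ to a bounded $\mathcal{L}_V$-superharmonic combination of $u_1, u_2$ --- for instance $f := \arctan u_1 + \arctan u_2 \in [0, \pi)$ --- produces a sequence $(x_k) \subset \Sigma$ along which $\mathcal{L}_V f(x_k) \to 0$, forcing the tangential components $W_i^T(x_k) = \nabla u_i(x_k)$ to tend to zero for both $i = 1, 2$. The unit normal $N(x_k)$ must then be asymptotically parallel to both $W_1$ and $W_2$, contradicting their linear independence.

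The main obstacle in the proof of Theorem \ref{thm-HS-2} is precisely ensuring that this Omori--Yau step goes through for the weighted operator $\mathcal{L}_{r-1}$: uniform ellipticity is obtained from $P_{r-1} \geq \varepsilon I$ and the drift is controlled via the growth bound on $\sigma_{r-1}$. For the Corollary neither issue arises, so invoking Theorem \ref{thm-HS-2} directly is the cleanest approach and no curvature bound on $\Sigma$ is required.
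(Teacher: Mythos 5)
Your proposal is correct and coincides with the paper's own derivation: the Corollary is obtained exactly as you say, by specializing Theorem \ref{thm-HS-2} to $r=1$, where $P_0=I\geq\ve I$ with $\ve=1$ and $\sigma_0\equiv1$ makes \eqref{EquationCM} automatic. (Your additional sketch of the ``mechanism'' inside Theorem \ref{thm-HS-2} via $\mathcal{L}_V$-harmonic linear functions and $\arctan u_1+\arctan u_2$ is not how the paper proves that theorem --- it uses the capped distance $d_R$ to the edge $\LL(R)$ of the wedge --- but this is only commentary and does not affect the validity of the Corollary's proof.)
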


\begin{remark}
   Clearly, the bowl soliton, the translating catenoids, and their counterparts for $r>1,$ classified in \cite{LP}, intersect every vertical hyperplane and therefore cannot be contained in the intersection of two transversal vertical half-spaces. On the other hand, when $r=1$, the Grim Reaper cylinder lies between two parallel hyperplanes; however, once one of these hyperplanes is fixed, the cylinder intersects every other vertical hyperplane transversal to it. This shows that the Grim Reaper cylinder is not contained in the intersection of two transversal vertical half-spaces either.
\end{remark}

\section{Omori-Yau type maximum principles}

The celebrated  Omori-Yau maximum principle can be considered in a variety of differential operators acting on smooth functions of a Riemannian manifold $\Sigma^n$ other than the Laplacian. In the following, we use the maximum principle found in \cite{BP} which we include a complete proof here (with more details) for the sake of completeness.

Let $\Sigma^n$ be a $n$-dimensional Riemannian manifold, $f:\Sigma^n\to\R$ be a class $\CC^2$ function, and $\phi:T\Sigma^n\to T\Sigma^n$ be a linear symmetric tensor. Define the second-order differential operator
\[
\mathcal{L}_{\phi} f := \tr(\phi\circ\hesss f) - \lan Z,\n f\ran,
\] 
where $Z$ is a vector field defined on $\Sigma^n$ with $\sup_{\Sigma^n}\| Z \| <\infty$. Here, $\hesss f:T\Sigma^n\to T\Sigma^n$ is the linear operator $\hesss f(W)=\n_W\n f,$ associated to the hessian quadratic form $\hess f$, i.e., $\hess f(W_1,W_2)=\lan\hesss f(W_1),W_2\ran.$

\begin{lemma}[G. P. Bessa and L. Pessoa, \cite{BP}]\label{BL}
Let $\Sigma^n$ be an $n$-\!\! dimensional complete Riemannian manifold, $\phi:T\Sigma^n\to T\Sigma^n$ be a symmetric and positive semidefinite linear tensor, and $Z$ be a bounded vector field on $\Sigma^n$. If there exists a positive function $\gamma\in \CC^2(\Sigma^n)$ and $G:[0,\infty)\to[0,\infty)$ such that
\begin{itemize}
\item[(i)] $G(0)>0,$ $G'(t)\geq 0,$ $G(t)^{-1/2}\not\in L^1([0,\infty));$
\item[] 
\item[(ii)] $\gamma(x)\to\infty$ when $x\to\infty;$
\item[]
\item[(iii)] $\exists A>0$ such that $\displaystyle{\|\n \gamma\|\leq A\sqrt{G(\gamma)}\left(\int_a^\gamma \frac{ds}{\sqrt{G(s)}}+1\right)}$ off a compact set, for some $a\gg1$ ;
\item[]
\item[(iv)] $\exists B>0$ such that $\displaystyle{\tr(\phi\circ\hesss \gamma) \leq B\sqrt{G(\gamma)}\left(\int_a^\gamma \frac{ds}{\sqrt{G(s)}}+1\right)}$ off a compact set, for some $a\gg1$;
\end{itemize}
\vspace{2mm}
then, for every function $u\in\CC^2(\Sigma^n)$ satisfying
\begin{equation}\label{hyp-u}
\lim_{x\to\infty}\frac{u(x)}{\vp(\gamma(x))}=0, \quad \mbox{for}\quad \vp(t)=\ln\left(\int_0^t \frac{ds}{\sqrt{G(s)}}+1\right),\ t\in[0,\infty),
\end{equation}
there exists a sequence of points $\{x_k\}_k\subset\Sigma^n$ such that
\begin{equation}\label{eq-OY}
\|\n u(x_k)\|<\frac{1}{k} \quad \mbox{and} \quad  \mathcal{L}_{\phi} u(x_k)<\frac{1}{k}.
\end{equation}
Moreover, if instead of \eqref{hyp-u} we assume that $u^\ast=\sup_{\Sigma^n} u<\infty$, then
\[
\lim_{k\to\infty} u(x_k) = u^\ast.
\] 
\end{lemma}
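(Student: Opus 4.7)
The approach is the classical Omori--Yau perturbation scheme: for each small $\varepsilon > 0$, subtract from $u$ a coercive auxiliary function, force the perturbed function to attain a global maximum at some point $x_\varepsilon$, and transfer the vanishing-gradient and nonpositive-Hessian data at $x_\varepsilon$ back to $u$ via an $\varepsilon$-estimate on the auxiliary. The natural auxiliary here is $F := \varphi(\gamma)$. Writing $P(t) := \int_0^t ds/\sqrt{G(s)} + 1$ so that $\varphi = \ln P$ and $\varphi'(t) = 1/(\sqrt{G(t)}\,P(t))$, the hypotheses (i) and (ii) imply $F(x) \to \infty$ as $x \to \infty$. Define $u_\varepsilon := u - \varepsilon F$; assumption \eqref{hyp-u} (respectively $u^\ast < \infty$) together with $F \to \infty$ forces $u_\varepsilon \to -\infty$ at infinity, so $u_\varepsilon$ attains a global maximum at some $x_\varepsilon \in \Sigma^n$, where $\nabla u(x_\varepsilon) = \varepsilon\,\nabla F(x_\varepsilon)$ and $\hess u(x_\varepsilon) \leq \varepsilon\,\hess F(x_\varepsilon)$.

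The heart of the proof is a uniform bound on $\mathcal{L}_\phi F$. A direct computation gives $\hess F = \varphi''(\gamma)\,d\gamma \otimes d\gamma + \varphi'(\gamma)\,\hess \gamma$, with
\[
\varphi''(t) = -\frac{G'(t)}{2\,G(t)^{3/2}\,P(t)} - \frac{1}{G(t)\,P(t)^2}.
\]
Since $G' \geq 0$ by (i), $\varphi''(t) \leq 0$, and because $\phi$ is positive semidefinite the rank-one term drops out after taking traces:
\[
\tr(\phi \circ \hesss F) \leq \varphi'(\gamma)\,\tr(\phi \circ \hesss \gamma).
\]
Using $\int_a^\gamma ds/\sqrt{G(s)} + 1 \leq P(\gamma)$ to substitute (iii) into $\varphi'(\gamma)\|\nabla \gamma\|$ and (iv) into $\varphi'(\gamma)\,\tr(\phi \circ \hesss \gamma)$, the factor $\sqrt{G(\gamma)}\,P(\gamma)$ cancels exactly with $1/\varphi'(\gamma)$, yielding $\|\nabla F\| \leq A$ and $\tr(\phi \circ \hesss F) \leq B$ off a compact set. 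Continuity extends these to uniform bounds on all of $\Sigma^n$, so $\mathcal{L}_\phi F \leq K := B + A\,\sup_{\Sigma^n}\|Z\|$ everywhere.

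Plugging back at $x_\varepsilon$, and again using $\phi \geq 0$ together with $\hess u(x_\varepsilon) \leq \varepsilon\,\hess F(x_\varepsilon)$, one obtains $\|\nabla u(x_\varepsilon)\| \leq \varepsilon K$ and $\mathcal{L}_\phi u(x_\varepsilon) \leq \varepsilon\,\mathcal{L}_\phi F(x_\varepsilon) \leq \varepsilon K$. Choosing $\varepsilon_k := 1/(k(K+1))$ and $x_k := x_{\varepsilon_k}$ establishes \eqref{eq-OY}. For the final statement, when $u^\ast < \infty$, fix $\delta > 0$ and $y_\delta$ with $u(y_\delta) > u^\ast - \delta/2$; since $F \geq 0$ and $x_\varepsilon$ maximises $u_\varepsilon$, one has $u(x_\varepsilon) \geq u(x_\varepsilon) - \varepsilon F(x_\varepsilon) \geq u(y_\delta) - \varepsilon F(y_\delta) > u^\ast - \delta$ for all sufficiently small $\varepsilon$, hence $u(x_k) \to u^\ast$.

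The principal obstacle is the precise balancing in the middle paragraph. It relies on the explicit identity $\varphi'(t) = 1/(\sqrt{G(t)}\,P(t))$ matching the growth allowed in (iii)--(iv), and on the sign $\varphi'' \leq 0$ (a consequence of $G' \geq 0$), which is what allows the positive semidefinite tensor $\phi$ to annihilate the $d\gamma \otimes d\gamma$ term; without either ingredient the $\varepsilon$-transfer at the maximum would fail to produce a constant bound, and the sequence $\{x_k\}$ could not be extracted.
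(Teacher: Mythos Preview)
Your proof is correct and follows essentially the same approach as the paper: both perturb $u$ by $\varepsilon\,\varphi(\gamma)$, exploit the exact identity $\varphi'(t)=1/(\sqrt{G(t)}\,P(t))$ together with $\varphi''\le 0$ and $\phi\ge 0$ to cancel the growth allowed in (iii)--(iv), and then read off the gradient and $\mathcal L_\phi$ bounds at the maximum of the perturbed function. Your packaging (first bounding $\|\nabla F\|$ and $\mathcal L_\phi F$ uniformly, then transferring) is slightly cleaner than the paper's pointwise computation at $x_k$, and your final $u(x_k)\to u^\ast$ argument avoids the paper's double-index step, but these are cosmetic differences.
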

\begin{proof}
Let
\[
f_k(x)=u(x)-\ve_k\vp(\gamma(x)),
\]
for each positive integer $k,$ where $\ve_k>0$ is a sequence satisfying $\ve_k\to 0,$ when $k\to\infty.$ Since, for a fixed $x_0\in\Sigma^n,$ the sequence $\{f_k(x_0)\}_k$, defined by $f_k(x_0)=u(x_0)-\ve_k\vp(\gamma(x_0))$ is bounded, adding a positive constant to the function $u,$ if necessary, we may assume that $f_k(x_0)>0$ for every $k>0$. Notice that, by \eqref{hyp-u}, 
\[
\lim_{x\to\infty}\frac{f_k(x)}{\vp(\gamma(x))}=\lim_{x\to\infty}\frac{u(x)}{\vp(\gamma(x))}-\ve_k=-\ve_k< 0,
\] 
which implies that $f_k$ is non-positive out of a compact set $\Omega_{k}\subset\Sigma^n$ containing $x_0$. Thus, $f_k$ achieves its maximum at a point $x_k\in \Omega_{k}$ for each $k\geq 1$. Now, notice that
\begin{equation}\label{vp-prime}
\vp'(t)=\left[\sqrt{G(t)}\left(\int_0^t \frac{ds}{\sqrt{G(s)}}+1\right)\right]^{-1}>0
\end{equation}
and
\begin{equation}
\begin{aligned}
\vp''(t)&=-\left[\frac{G'(t)}{2\sqrt{G(t)}}\left(\int_0^t\frac{ds}{\sqrt{G(s)}}+1\right)+1\right]\times\\
&\quad\times\left[\sqrt{G(t)}\left(\int_0^t \frac{ds}{\sqrt{G(s)}}+1\right)\right]^{-2}<0.
\end{aligned}
\end{equation}
Since
\[
\n f_k = \n u - \ve_k\vp'(\gamma)\n\gamma
\]
and
\[
\hess f_k (W,W)= \hess u (W,W) - \ve_k\left[\vp'(\gamma)\hess\gamma(W,W)+\vp''(\gamma)\lan W,\n\gamma\ran^2\right],
\]
we have, at $x_k,$ that
\begin{equation}\label{eqPeng1}
\n u(x_k)=\ve_k\vp'(\gamma(x_k))\n\gamma(x_k)
\end{equation}
and
\begin{equation}\label{eqPeng2}
\begin{aligned}
\hess u(x_k)(W,W)&\leq \ve_k\vp'(\gamma(x_k))\hess\gamma(x_k)(W,W)\\
&\quad+\ve_k\vp''(\gamma(x_k))\lan W,\n\gamma(x_k)\ran^2\\
&\leq \ve_k\vp'(\gamma(x_k))\hess\gamma(x_k)(W,W).
\end{aligned}
\end{equation}
Now, Equation \eqref{eqPeng1} and hypothesis (iii) imply
\[
\|\n u(x_k)\|=\ve_k\vp'(x_k)\|\n\gamma(x_k)\|\leq \ve_k A <\frac{1}{k},
\] 
for $\ve_k<\frac{1}{kA}.$ On the other hand, letting   $\{e_1,\ldots,e_n\}$ be  an orthonormal frame formed with  eigenvectors of $\phi: T\Sigma^n\to T\Sigma^n,$ with  nonnegative eigenvalues $\lambda_1,\ldots,\lambda_n,$  we have, using \eqref{vp-prime} and hypothesis (iii) and (iv),
\[
\aligned
\mathcal{L}_{\phi}  u(x_k)&= \sum_{i=1}^n\lan\hesss u(x_k)(e_i),\phi(e_i)\ran - \lan Z(x_k),\n u(x_k)\ran\\
&= \sum_{i=1}^n\lambda_i\lan\hesss u(x_k)(e_i),e_i\ran- \lan Z(x_k),\n u(x_k)\ran\\
& = \sum_{i=1}^n\lambda_i\hess u(x_k)(e_i,e_i)- \lan Z(x_k),\n u(x_k)\ran\\
&\leq \ve_k\vp'(\gamma(x_k))\sum_{i=1}^n\lambda_i\hess\gamma(x_k)(e_i,e_i)\\
&\quad- \ve_k\vp'(\gamma(x_k))\lan Z(x_k),\n \gamma(x_k)\ran\\
&=\ve_k\vp'(\gamma(x_k))\tr(\phi\circ\hesss \gamma)(x_k)\\
&\quad -\ve_k\vp'(\gamma(x_k))\lan Z(x_k),\n \gamma(x_k)\ran\\
&\leq \ve_k\left(B+A\sup_{\Sigma^n}\|Z\|\right)<\frac{1}{k},
\endaligned
\]
if we take 
\[
\ve_k<\frac{1}{k\max\{A,B+A\sup_{\Sigma^n}\|Z\|\}}.
\] 
\vspace{2mm}
If $=u^\ast=\sup_{\Sigma^n}u(x)<\infty,$ then, given an arbitrary integer $m>0,$ let $y_m\in\Sigma^n$ such that
\[
u(y_m)>u^\ast - \frac{1}{2m}.
\]
This gives
\[
\aligned
f_k(x_k)&=u(x_k)-\ve_k\gamma(x_k)\geq f_k(y_m)\\
&=u(y_m)-\ve_k\gamma(y_m)\\
&>u^\ast - \frac{1}{2m}-\ve_k\gamma(y_m),
\endaligned
\]
which implies
\[
\aligned
u(x_k)&>u^\ast - \frac{1}{2m}-\ve_k\gamma(y_m)+\ve_k\gamma(x_k)\\
&>u^\ast - \frac{1}{2m}-\ve_k\gamma(y_m).\\
\endaligned
\]
Now, choosing $k_m$ such that $\ve_{k_m}\gamma(y_m)<\frac{1}{2m},$ we obtain that
\[
u(x_{k_m})>u^\ast -\frac{1}{m}.
\]
Thus, by replacing $x_k$ by $x_{k_m}$ if necessary, we can conclude that 
\[
\lim_{k\to\infty}u(x_k)=u^\ast.
\]
\end{proof}

\begin{remark}\label{rem-ln}
The typical examples of functions $G$ satisfying condition (i) of Lemma \ref{BL} are given by
\begin{equation}\label{example-G}
    G(t)=t^2\prod_{j=1}^N\left(\log^{(j)}(t)\right)^2, \quad t\gg1,
\end{equation}
where $\log^{(j)}(t)$ denotes the $j$-th iterate of $\log t.$
\end{remark}
In the following, we apply Lemma \ref{BL} to the context of isometric immersions. Let $X:\Sigma^n\to \R^{n+1}$ be an isometric immersion and $P_r$ the $r$-th Newton transformation defined by \eqref{Pr-defi}, p.\pageref{Pr-defi}. We introduce the functional operator
\[
L_{r-1}f=\tr(P_{r-1}\circ\hesss f), \quad f\in \CC^2(\Sigma^n).
\]
This operator is important in the study of $\sigma_r$ as a generalization of the Laplacian, see, for example \cite{ABS-2023}, \cite{ABS-2025}, \cite{AdCE-2003}, \cite{AdCR-1993}, \cite{AIR-2013}, \cite{BX-2024}, \cite{BMR-2013}, \cite{CY-1977}, \cite{HL-1995}, \cite{IMR-2011}, \cite{R-1993}, and \cite{W-1985}.

Applying Lemma \ref{BL} to the operator $L_{r-1}$, we obtain

\begin{theorem}\label{theo-max-princ-Lr}
Let $\Sigma^n$ be a complete hypersurface of $\R^{n+1}$ and $A:T\Sigma^n\to T\Sigma^n$ be its shape operator. Assume that the $(r-1)$-th Newton transformation $P_{r-1}$ is positive semidefinite. If one of the following conditions holds:
    \begin{itemize}
        \item[(i)] $\sigma_{r-1}$ is bounded and
        \begin{equation}\label{hyp-OY-1}
        \limsup_{\rho(x)\to\infty}\frac{\|A(x)\|}{\rho(x)\log(\rho(x))\log(\log(\rho(x)))}<\infty,    
        \end{equation}
where $\rho:\Sigma^n\to\R_+$ is the intrinsic distance on $\Sigma^n$ to a fixed point;
        \item[(ii)] or $\Sigma^n$ is properly immersed, 
        \begin{equation}\label{hyp-OY-2}
\limsup_{\delta(x)\to\infty}\frac{|\sigma_{r-1}(x)|}{[\delta(x)]^2\log(\delta(x))\log(\log(\delta(x)))}<\infty,
        \end{equation}
and 
\begin{equation}\label{hyp-OY-3}
\limsup_{\delta(x)\to\infty}\frac{|\sigma_{r}(x)|}{\delta(x)\log(\delta(x))\log(\log(\delta(x)))}<\infty,
\end{equation}
where $\delta:\Sigma^n\to\R_+$ is the extrinsic distance of $\R^{n+1}$ to a fixed point, restricted to $\Sigma^n,$
    \end{itemize}
then, for any class $\CC^2$ function $f:\Sigma^n\to\R,$ bounded from above, there exists a sequence of points $\{x_k\}_k\subset\Sigma^n$ such that
\begin{equation}\label{CY-max-princ-0}
\begin{cases}
\displaystyle{\lim_{k\to \infty} f(x_k)}=\sup_{\Sigma^n} f,\\
\|\n f (x_k)\|\leq \dfrac{1}{k},\\
L_{r-1} f (x_k) \leq \dfrac{1}{k}.\\
\end{cases}
\end{equation}
\end{theorem}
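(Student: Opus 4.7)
The plan is to invoke Lemma \ref{BL} with $\phi = P_{r-1}$ (positive semidefinite by hypothesis) and $Z = 0$, so that $\mathcal{L}_\phi = L_{r-1}$; applying its second conclusion to the bounded-above function $u = f$ yields precisely the sequence in \eqref{CY-max-princ-0}. Since $\sup f < \infty$, the condition $u/\vp(\gamma) \to 0$ is automatic as soon as $\vp(\gamma) \to \infty$. Following Remark \ref{rem-ln}, I would choose
\begin{equation*}
G(t) = t^2(\log t)^2(\log\log t)^2, \qquad t \gg 1,
\end{equation*}
extended to be positive and $\CC^1$ on $[0,\infty)$, so that $\sqrt{G(t)} = t\log t\log\log t$ matches the growth appearing in \eqref{hyp-OY-1}--\eqref{hyp-OY-3}, $G^{-1/2}$ is non-integrable at infinity, and $\vp(t)\sim \ln\log\log\log t \to \infty$. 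It then remains to produce $\gamma\in\CC^2(\Sigma^n)$ satisfying (ii)--(iv) of Lemma \ref{BL}.

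\textbf{Case (ii).} I would take $\gamma(x) = \sqrt{|X(x)-x_0|^2+1}$, which is smooth on $\Sigma^n$, equals $\delta(x)$ outside a compact set, and satisfies $\gamma \to \infty$ at infinity by properness, with $\|\n\gamma\|\leq 1$ (verifying (iii)). Using the classical identities $\tr P_{r-1}=(n-r+1)\sss_{r-1}$, $\tr(P_{r-1}A)=r\sss_r$, and the elementary computation $\hesss(\tfrac{1}{2}|X|^2)(V) = V + \lan X,N\ran A(V)$ for the position vector in $\R^{n+1}$, one obtains
\begin{equation*}
L_{r-1}\bigl(\tfrac{1}{2}|X|^2\bigr) = (n-r+1)\sss_{r-1} + r\sss_r \lan X,N\ran.
\end{equation*}
Combining with $\hesss\delta = \delta^{-1}\bigl[\hesss(\tfrac{1}{2}\delta^2) - \n\delta\otimes\n\delta\bigr]$, $P_{r-1}\geq 0$, and $|\lan X,N\ran| \leq \delta$ gives
\begin{equation*}
\tr(P_{r-1}\circ\hesss\delta) \leq \frac{(n-r+1)\sss_{r-1}}{\delta} + r|\sss_r|,
\end{equation*}
which by \eqref{hyp-OY-2}--\eqref{hyp-OY-3} is $O(\delta\log\delta\log\log\delta) = O(\sqrt{G(\gamma)})$ off a compact set, verifying (iv).

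\textbf{Case (i) and main obstacle.} Here I would take $\gamma$ to be a $\CC^2$ exhaustion of $\Sigma^n$ equivalent to the intrinsic distance $\rho(x)$, produced by a Greene--Wu type smoothing, so that $\|\n\gamma\|\leq C$ and $\hesss\gamma$ compares to $\hesss\rho$ up to a multiplicative constant. By the Gauss equation $|\sect_\Sigma(x)| \leq \|A(x)\|^2$, so \eqref{hyp-OY-1} forces $\sect_\Sigma \geq -CG(\rho)$ off a compact set. Hessian comparison with this non-constant radial curvature bound (whose model Jacobi field has logarithmic derivative of order $\sqrt{G(\rho)}$) gives, off the cut locus,
\begin{equation*}
\hesss\rho \leq C\sqrt{G(\rho)}\,(I - d\rho\otimes d\rho).
\end{equation*}
Tracing against $P_{r-1}\geq 0$ and using that $\sss_{r-1}$ is bounded yields $\tr(P_{r-1}\circ\hesss\rho) \leq B\sqrt{G(\rho)}$, which transfers to $\gamma$ and is hypothesis (iv). The main technical difficulty sits precisely in case (i): carrying out the Greene--Wu smoothing with controlled Hessian and invoking Hessian comparison with a non-constant, iterated-log radial curvature lower bound. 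Case (ii) is by contrast essentially algebraic, once $\hesss(\tfrac{1}{2}|X|^2)$ and the $P_{r-1}$-trace identities are in hand. With (i)--(iv) of Lemma \ref{BL} verified in each case, the conclusion \eqref{CY-max-princ-0} is immediate.
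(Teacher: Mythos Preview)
Your proposal is correct in strategy and essentially matches the paper's proof: both apply Lemma~\ref{BL} with $\phi=P_{r-1}$, $Z=0$, and a distance-type exhaustion $\gamma$, verifying (iii)--(iv) via Hessian comparison in case~(i) and via the explicit formula $L_{r-1}(\tfrac12|X|^2)=(n-r+1)\sss_{r-1}+r\sss_r\lan X,N\ran$ in case~(ii).

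The implementation differs in two bookkeeping choices. First, the paper takes $\gamma=\rho^2$ (resp.\ $\gamma=\delta^2$) together with $G(t)=(t\log t)^2$, so that the extra $\log\log$ factor needed to absorb \eqref{hyp-OY-1}--\eqref{hyp-OY-3} comes from the integral $\int_a^\gamma G^{-1/2}\,ds=\log\log\gamma+C$; you instead take $\gamma\sim\rho$ (resp.\ $\gamma\sim\delta$) and build the $\log\log$ into $G$ itself. These are equivalent reparametrizations. Second, and more substantively, for case~(i) the paper does \emph{not} smooth $\rho$: it works with $\gamma=\rho^2$ directly, cites the Hessian comparison estimate $\hess\gamma\leq B\gamma^{1/2}G_0(\gamma^{1/2})^{1/2}\,g$ from Example~1.13 of \cite{prs}, and disposes of the cut locus by Calabi's trick inside the Omori--Yau argument. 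Your proposed route through a Greene--Wu approximation with \emph{controlled Hessian} is valid in principle but is not a consequence of the standard Greene--Wu theorem (which controls only $C^0$ and the Lipschitz constant); making it rigorous would require an additional approximation-of-supersolutions argument. The paper's Calabi-trick approach is shorter and more standard here, and avoids precisely the obstacle you flagged.
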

\begin{proof}
To prove the first part, under the hypothesis (i), let us follow Example 1.13 of \cite{prs}. Let $\gamma(x)=\rho(x)^2=[\dist_{\Sigma}(x,p_0)]^2,$ where $\dist_\Sigma(x,p_0)$ is the intrinsic distance of $\Sigma^n$ to a point $p_0\in\Sigma^n.$ Then $\gamma$ is smooth in $\Sigma^n\setminus(\{p_0\}\cup\mbox{cut}(p_0)),$ where $\mbox{cut}(p_0)$ denotes the cut locus of $p_0.$ Since, for the points at the cut locus, we can apply the Calabi's trick, we will work only with the points where $\gamma$ is smooth. 

If $\|A(x)\|^2\leq C G_0(\rho(x)),$ for some smooth function $G_0:[0,+\infty)\to\R,$ even at the origin, then, by the Gauss equation, the sectional curvatures $K_{\Sigma}$ of $\Sigma^n$ satisfies
\[
K_{\Sigma}\geq -2\|A\|^2\geq - 2CG_0(\rho).
\]
This implies, following the proof of Example 1.13 of \cite{prs} step-by step, that
\[
\hess\gamma(Y,Y)\leq B\gamma^{1/2}G_0(\gamma^{1/2})^{1/2}\|Y\|^2
\]
for some $B>0$ and $\rho(x)$ sufficiently large. If $\lambda_1,\ldots,\lambda_n$ are the eigenvalues of $P_{r-1}$ with eigenvetors $e_1,\ldots,e_n,$ we have
\begin{equation}\label{L-gamma}
\begin{aligned}
L_{r-1}\gamma &= \tr(P_{r-1}\circ\hesss \gamma)\\ 
&= \sum_{j=1}^{n}\langle  P_{r-1}(\hesss \gamma(e_j)),e_j\rangle\\
&= \sum_{j=1}^{n}\langle \hesss\gamma(e_j), P_{r-1}(e_j)\rangle\\
&= \sum_{j=1}^{n}\lambda_j\langle \hesss\gamma(e_j), (e_j)\rangle\\
&=\sum_{j=1}^n\lambda_j\hess\gamma(e_j,e_j)\\
&\leq B(\tr P_{r-1})\gamma^{1/2}G_0(\gamma^{1/2})^{1/2}.
\end{aligned}
\end{equation}
Now, taking $G(t)=(t\log t)^2$ and $\gamma=\rho^2,$ in order to apply Lemma \ref{BL}, observe that
\begin{equation}\label{P-gamma}
\begin{aligned}
P(\gamma)&:=\sqrt{G(\gamma)}\left(\int_a^\gamma \frac{ds}{\sqrt{G(s)}}+1\right)\\
&=\gamma\log(\gamma)[\log(\log(\gamma))-\log(\log(a))+1]\\
&=2\rho^2\log\rho[\log(\log(2\rho) - \log(\log(a))+1]\\
&=2\rho^2\log\rho[\log(\log(\rho)+\log 2 - \log(\log(a))+1]\\
&=2\rho^2\log\rho\log(\log\rho),
\end{aligned}
\end{equation}
for $a=e^{2e}.$ Clearly $\|\n\gamma\|=2\rho\leq P(\gamma)$ for $\rho$ sufficiently large. On the other hand, the condition (iv) of Lemma \ref{BL} holds if we assume that $\sigma_{r-1}$ is bounded and take 
\begin{equation}\label{G0}
G_0(t)=(t\log t\log(\log t))^2.
\end{equation}
Indeed, since $\tr(P_{r-1})=(n-r+1)\sigma_{r-1}$ (see Lemma 2.1, p.279 of \cite{BC}), Equation \eqref{L-gamma} gives 
\[
L_{r-1}\gamma \leq B_1\sigma_{r-1}\rho G_0(\rho)^{1/2} \leq B_2\rho^2\log\rho\log(\log\rho).
\]
In order to prove the second part, i.e., under the hypothesis (ii), we will follow the ideas of Example 1.14 of \cite{prs}. Indeed, assume that $\Sigma^n$ is properly immersed and let $\gamma(x)=[\delta(x)]^2=\|X(x)-X(p_0)\|^2$. By using \eqref{P-gamma} with $\delta$ in the place of $\rho,$ we have $\|\n\gamma\|\leq 2\delta\leq P(\gamma)$ for large values of $\delta$. On the other hand,
\begin{equation}\label{hess-delta-2}
\begin{aligned}
\hess\gamma(Y,Z) &= \overline{\hess}\ \delta^2(Y,Z) + \lan A(X),Y\ran\lan N,\overline{\n}\delta^2\ran\\
&=2\lan X,Y\ran + \lan A(X),Y\ran\lan N,\overline{\n}\delta^2\ran,\\
\end{aligned}
\end{equation}
where $\overline{\n}$ and $\overline{\hess}$ denote the gradient and the hessian of $\R^{n+1},$ respectively. This implies
\begin{equation}\label{Lr-delta-2}
\begin{aligned}
L_{r-1}\gamma &= 2(\tr P_{r-1}) + 2(\tr(A\circ P_{r-1}))\lan N,\delta\overline{\n}\delta\ran,\\
&=2(n-r+1)\sigma_{r-1}+2r\sigma_r\lan N,\delta\overline{\n}\delta\ran,\\
\end{aligned}
\end{equation}
where, in the last equality, we used that
\begin{equation}
    \tr(P_{r-1})=(n-r+1)\sigma_{r-1} \quad \mbox{and} \quad \tr(A\circ P_{r-1})=r\sigma_r,
\end{equation}
(see Lemma 2.1, p.279 of \cite{BC}). If 
\[
|\sigma_{r-1}|\leq B_1 \gamma^{1/2}G_0(\gamma^{1/2})^{1/2} \quad \mbox{and} \quad |\sigma_r|\leq B_2 G_0(\gamma^{1/2})^{1/2}
\]
for large $\delta(x),$ and for $G_0(t)$ given by \eqref{G0}, then
\[
\begin{aligned}
|L_{r-1}\gamma|&\leq 2(n-r+1)|\sigma_{r-1}|+2r|\sigma_r|\gamma^{1/2}\\
&\leq 2(n-r+1)B_1 \gamma^{1/2}G_0(\gamma^{1/2})^{1/2} + 2r B_2\gamma^{1/2}G_0(\gamma^{1/2})^{1/2}\\
&=:B_3\gamma^{1/2}G_0(\gamma^{1/2})^{1/2}\\
&\leq B_4 P(\gamma).
\end{aligned}
\]
The result then follows by applying Lemma \ref{BL} for $G(t)=(t\log t)^2.$
\end{proof}
\begin{remark}\label{rem-ln-2}
Clearly, Theorem \ref{theo-max-princ-Lr} holds replacing, $G(t)=(t\log t)^2$ by those given in Equation \eqref{example-G} of Remark \ref{rem-ln}. Our choice was aesthetic.
\end{remark}

\section{proof of Theorem \ref{thm-Cone}}
\begin{proof}[Proof of Theorem \ref{thm-Cone}]
Let us denote the extrinsic distance to the origin by $\delta_0(x)=\|X(x)\|$.
    From $[\delta_0(x)]^2=\lan X(x),X(x)\ran$ we have
    \begin{equation}\label{n-delta-0}    
        \delta_0\n\delta_0 = X^\top\quad \mbox{and}\quad \|\n\delta_0\| = \frac{\|X^\top\|}{\|X\|}\leq 1.
    \end{equation}
    
    On the other hand, by \eqref{Lr-delta-2} and by the definition of $L_{r-1},$
    \begin{equation}\label{Lr1}
        \frac{1}{2}L_{r-1}\delta_0^2=(n-r-1)\sigma_{r-1}+r\sigma_r\lan X,N\ran
    \end{equation}
    and
    \begin{equation}\label{Lr2}
        \frac{1}{2}L_{r-1}\delta_0^2=\delta_0 L_r\delta_0 + \lan P_{r-1}(\n\delta_0),\n\delta_0\ran.
    \end{equation}
    Combining Equations \eqref{n-delta-0}, \eqref{Lr1}, and \eqref{Lr2} gives
    \begin{equation}\label{Lr3}
        L_{r-1}\delta_0 = \frac{1}{\delta_0}[(n-r+1)\sigma_{r-1}+r\sigma_r\lan X,N\ran] -\frac{1}{\delta_0^3}\lan P_{r-1}(X^\top),X^\top\ran.
        \end{equation}
        Suppose by contradiction that a translating soliton $\Sigma^{n}\subset \mathbb{R}^{n+1}$  satisfying the hypotheses of Theorem \ref{thm-Cone} is contained in the complement of a cone $\CC_{V,a}$, this is  
\[\Sigma^n\subset \left(\CC_{V,a}\right)^c=\left\{X\in\R^{n+1}; \,\left\lan \frac{X}{\|X\|},V\right\ran \leq \,a, \,\,a\in (0,1)\right\}.\]
    Define the function $\psi: \Sigma^n\to\R$ by
    \begin{equation}
        \psi(x):=\lan X(x),V\ran - a\|X(x)\|\leq 0.
    \end{equation}
Since for any $U_1,U_2\in T\Sigma^n,$
\begin{equation}\label{deriv-XV}
    U_1(\lan X,V\ran)=\lan U_1,V\ran \quad \mbox{and} \quad U_2U_1(\lan X,V\ran)=\lan\overline{\n}_{U_2}U_1,V\ran,
\end{equation}
we have $
\n\lan X,V\ran = V^\top$. This gives 
\[
\n\psi = V^\top -a\n\delta_0,
\]
and then, by \eqref{n-delta-0},
\[
\|\n\psi\|=\|V^\top -a\n\delta_0\|\geq\left\vert \|V^\top\|-a\frac{\|X^\top\|}{\|X\|}\right\vert.
\]
Now, choosing an orthonormal frame $\{e_1,e_2,\ldots,e_n\},$ defined on $\Sigma^n,$ formed with the eigenvectors of $P_{r-1}$ corresponding to eigenvalues $\lambda_{i}$, we have that
\[
\begin{aligned}
\hess\lan X,V\ran (e_i,e_j)&= e_ie_j(\lan X,V\ran) - \n_{e_i}e_j(\lan X,V\ran)\\
&=\lan\overline{\n}_{e_i}e_j,V\ran - \lan\n_{e_i}e_j,V\ran\\
&=\lan B(e_i,e_j),V\ran\\
&=\lan A(e_i),e_j\ran\lan N,V\ran. 
\end{aligned}
\]
Recalling that (see Lemma 2.1, p.279 of \cite{BC}) \begin{equation}
    \tr(P_{r-1})=(n-r+1)\sigma_{r-1} \quad \mbox{and} \quad \tr(A\circ P_{r-1})=r\sigma_r,
\end{equation}
  we have
\begin{equation}\label{Lr-XV}    
\begin{aligned}
L_{r-1}\lan X,V\ran &= \tr(P_{r-1}\circ\hesss \lan X,V\ran)\\
&= \sum_{i=1}^{n}\langle  P_{r-1}(\hesss \lan X,V\ran(e_i)),e_i\rangle\\
&= \sum_{i=1}^{n}\langle \hesss\lan X,V\ran(e_i), P_{r-1}(e_i)\rangle\\
&= \sum_{i=1}^{n}\lambda_{i}\langle \hesss\lan X,V\ran(e_i), (e_i)\rangle\\
&= \sum_{i=1}^{n}\lambda_{i} \hess\lan X,V\ran(e_i,e_i)\\
&= \sum_{i=1}^{n}\lambda_{i}\lan A(e_i),e_i\ran\lan N,V\ran\\
&= \sum_{i=1}^{n}\lan A(e_i),P_{r-1}(e_i)\ran\lan N,V\ran\\
&= \sum_{i=1}^{n}\lan e_i,A P_{r-1}(e_i)\ran\lan N,V\ran\\
&= {\rm trace}(A\circ P_{r-1})\lan N,V\ran\\
&=  r\sigma_r\lan N,V\ran.
\end{aligned}
\end{equation}

This implies,
    \[
    \begin{aligned}
    L_{r-1}\psi &= L_{r-1}\lan X,V\ran - aL_{r-1}\delta_0 \\ &= r\sigma_r\lan N,V\ran -aL_{r-1}\delta_0 \\ &= r\sigma_r^2 - aL_{r-1}\delta_0.
    \end{aligned}
    \]
    Since $\psi$ is bounded from above, and assuming either condition (i) or (ii) in the statement of Theorem \ref{thm-Cone}, we may apply Theorem \ref{theo-max-princ-Lr}, p.\pageref{theo-max-princ-Lr}. Indeed, if \eqref{HS2-hyp-1} or \eqref{HS2-hyp-2} hold, then \eqref{hyp-OY-1} or \eqref{hyp-OY-2} follows, respectively, while \eqref{hyp-OY-3} is automatically satisfied since $|\sigma_r|\leq 1$ for translating solitons. Therefore, there exists a sequence of points $\{x_k\}_k\subset\Sigma^n$ such that
    \begin{equation}\label{OY2-1}
        \lim_{k\to\infty}\psi(x_k)=\sup_{\Sigma^n} \psi \leq 0,
    \end{equation}
    \begin{equation}\label{OY2-2}
        \frac{1}{k}>\|\n\psi (x_k)\|\geq \left|\|V^\top(x_k)\|-a\frac{\|x_k^\top\|}{\|x_k\|}\right|,
    \end{equation}
    and
    \begin{equation}\label{OY2-3}
   \frac{1}{k}>L_{r-1}\psi(x_k)=r[\sigma_r(x_k)]^2 -aL_{r-1}\delta_0(x_k).
    \end{equation}
    By \eqref{OY2-2}, it holds
    \begin{equation}\label{eq-HS-1}
        \|V^\top(x_k)\|-\frac{1}{k}\leq a\frac{\|x_k^\top\|}{\|x_k\|}<\frac{1}{k}+\|V^\top(x_k)\|.
    \end{equation}
    If $\sigma_r(x_k)=\lan N(x_k),V\ran\to0$, when $k\to\infty$, then $\|V^\top(x_k)\|\to1.$ Thus, by \eqref{eq-HS-1},
    \[
    \lim_{k\to\infty}\frac{\|x_k^\top\|}{\|x_k\|}=\frac{1}{a}>1,
    \]
which is an absurd. 

In the general case, since $|\sigma_r|=|\lan N,V\ran|\leq 1,$ there exists a subsequence $\{x_{k_l}\}$ such that $\sigma_r(x_{k_l})$ converges for $l\to\infty.$ This implies that
\[
\|V^\top(x_{k_l})\|^2=1-\lan V,N(x_{k_l})\ran^2 = 1-[\sigma_r(x_{k_l})]^2
\]
converges, and  by \eqref{eq-HS-1},  the sequence $\|x_{k_l}^\top\|/\|x_{k_l}\|$ also converges. Therefore,
\[
\begin{aligned}
\lim_{l\to\infty}\left(1-[\sigma_r(x_{k_l})]^2\right)&=\lim_{l\to\infty}\left(1-\lan V,N(x_{k_l})\ran^2\right)\\
&=\lim_{l\to\infty} \|V^\top(x_{k_l})\|^2\\
&= a^2\lim_{l\to\infty}\frac{\|x_{k_l}^\top\|^2}{\|x_{k_l}\|^2}\\
&=a^2\lim_{l\to\infty}\left(1-\frac{\lan x_{k_l},N(x_{k_l})\ran^2}{\|x_{k_l}\|^2}\right).
\end{aligned}
\]
This implies
\[
\lim_{l\to\infty}[\sigma_r(x_{k_l})]^2\left(1-[\sigma_r(x_{k_l})]^2\right)=a^2\lim_{l\to\infty}[\sigma_r(x_{k_l})]^2\left(1-\frac{\lan x_{k_l},N(x_{k_l})\ran^2}{\|x_{k_l}\|^2}\right),
\]
i.e.,
\begin{equation}\label{comp-Sr}
\begin{aligned}
\lim_{l\to\infty}&\left([\sigma_r(x_{k_l})]^4 - (1-a^2)[\sigma_r(x_{k_l})]^2\right)\\
&=a^2\lim_{l\to\infty}[\sigma_r(x_{k_l})]^2\left\lan\frac{x_{k_l}}{\|x_{k_l}\|},N(x_{k_l})\right\ran^2.\end{aligned}
\end{equation}
In particular,
\[
\lim_{l\to\infty}\left([\sigma_r(x_{k_l})]^2-(1-a^2)\right)\geq0.
    \]
On the other hand, by \eqref{OY2-3} and \eqref{Lr3}, we have 
\[
\begin{aligned}
\frac{1}{k}&>L_{r-1}\psi(x_k)\\
&=r[\sigma_r(x_k)]^2 - aL_{r-1}\|x_k\|\\
&=r[\sigma_r(x_k)]^2
-\frac{a}{\|x_k\|}[(n-r+1)\sigma_{r-1}(x_k)+r\sigma_r(x_k)\lan x_k,N(x_k)\ran]\\
&\quad+\frac{1}{\|x_k\|^3}\lan P_{r-1}x_k^\top,x_k^\top\ran\\
&\geq r[\sigma_r(x_k)]^2
-\frac{a}{\|x_k\|}[(n-r+1)\sigma_{r-1}(x_k)+r\sigma_r(x_k)\lan x_k,N(x_k)\ran]\\
&\geq  r[\sigma_r(x_k)]^2 - a(n-r+1)\frac{\sigma_{r-1}(x_k)}{\|x_k\|}
-ar\left\vert\sigma_r(x_k)\left\lan\frac{x_k}{\|x_k\|},N(x_k)\right\ran\right\vert.
\end{aligned}
\]
This implies
%\begin{equation}\label{comp-Sr-1}
%\begin{aligned}
%    \frac{1}{k}+&a(n-r+1)\frac{\sigma_{r-1}(x_k)}{\|x_k\|}\\
 %   &>r\sigma_r(x_k)^2 -ar\left|\sigma_r(x_k)\left\lan\frac{x_k}{\|x_k\|},N(x_k)\right\ran\right|. 
%\end{aligned}
%\end{equation}
\begin{equation}\label{comp-Sr-1}
\begin{aligned}
\frac{1}{k}+a(n-r+1)\frac{\sigma_{r-1}(x_k)}{\|x_k\|}&>r[\sigma_r(x_k)]^2\\
&\quad-ar\left|\sigma_r(x_k)\left\lan\frac{x_k}{\|x_k\|},N(x_k)\right\ran\right|. 
\end{aligned}
\end{equation}
Thus, using \eqref{comp-Sr} and passing to the subsequence $x_{k_l}$, we obtain
\begin{equation}\label{comp-Sr-2}
\begin{aligned}
    \frac{1}{r}&\lim_{l\to\infty}\left[\frac{1}{k_l}+a(n-r+1)\frac{\sigma_{r-1}(x_{k_l})}{\|x_{k_l}\|}\right]\\
    &\geq\lim_{l\to\infty} [\sigma_r(x_{k_l})]^2-\sqrt{[\sigma_r(x_{k_l})]^4-(1-a^2)[\sigma_r(x_{k_l})]^2}\\
    &=\lim_{l\to\infty}\alpha\left([\sigma_r(x_{k_l})]^2\right),
\end{aligned}
\end{equation}
where $\alpha:[1-a^2,1]\to\R$ is defined by 
\[
\alpha(t)=t-\sqrt{t^2-(1-a^2)t}.
\]
Since $\alpha'(t)<0$, $\alpha$ is a decreasing function with minimum value at $t=1$, $\alpha(1)=1-a.$ 

In order to conclude the proof, let us consider separately the cases of conditions (i) and (ii) of Theorem \ref{thm-Cone}. First, let us assume condition (ii) of Theorem \ref{thm-Cone}. This implies that
$\sigma_{r-1}$ is bounded and $\Sigma^n$ is not necessarily proper.
%$\Vert A(x)\Vert \leq D\,\rho(x)\log(\rho(x))\log(\log(\rho(x)))$ for $\rho(x)\gg 1$. 
%Observe that the line $t \to t V$ is transversal to $\Sigma^{n}$

Observe that the region $(\CC_{V,a})^c$ is invariant by translations in the direction $-V,$ i.e., if $\Sigma^n\subset (\CC_{V,a})^c,$ then $(\Sigma^n)_t:=\Sigma^n - tV\subset (\CC_{V,a})^c$ for any $t>0$. Moreover, since $\Sigma^n$ is a translating soliton with velocity vector $V$, each $(\Sigma^n)_t$ is also a translating soliton with the same velocity vector $V$. Then, considering $\Sigma^n$ as one of the $(\Sigma^n)_t$, if necessary, we can assume that
\[
\inf_{\Sigma^n}\|x\|\geq \frac{2a(n-r+1)\sup_{\Sigma^n}\sigma_{r-1}}{r(1-a)}.
\]
This gives
\[
\begin{aligned}
0&\geq \lim_{l\to\infty} \alpha\left(\sigma_r(x_{k_l})^2\right) - \frac{1}{r}\lim_{l\to\infty}\left[\frac{1}{k_l}+a(n-r+1)\frac{\sigma_{r-1}(x_{k_l})}{\|x_{k_l}\|}\right]\\
&\geq 1-a - \frac{1}{r}\lim_{l\to\infty}\left[\frac{1}{k_l}+a(n-r+1)\frac{\sup_{\Sigma^n}\sigma_{r-1}}{\inf_{\Sigma^n}\|x\|}\right]\\
&\geq \frac{1-a}{2}\\
&>0\\
\end{aligned}
\]
which is an absurd. 

Now, assume that condition (i) of Theorem \ref{thm-Cone} holds, i.e., $\Sigma^n$ is properly immersed and
\[
\limsup_{\delta_0(x)\to\infty}\frac{\sigma_{r-1}(x)}{\delta_0(x)}<\frac{r(1-a)}{a(n-r+1)},
\]
(the difference between $\delta$ and $\delta_0$ is a constant). This gives
\[
\begin{aligned}
0&\geq \lim_{l\to\infty} \alpha\left(\sigma_r(x_{k_l})^2\right) - \frac{1}{r}\lim_{l\to\infty}\left[\frac{1}{k_l}+a(n-r+1)\frac{\sigma_{r-1}(x_{k_l})}{\|x_{k_l}\|}\right]\\
&\geq 1-a - \frac{1}{r}\lim_{l\to\infty}\left[\frac{1}{k_l}+a(n-r+1)\frac{\sigma_{r-1}(x_{k_l})}{\|x_{k_l}\|}\right]\\
&>0\\
\end{aligned}
\]
and we obtain an absurd again.
\end{proof}

\section{Proof of Theorem \ref{thm-HS-1}}

\begin{proof}[Proof of Theorem \ref{thm-HS-1}.] Let $\Sigma^n \subset \mathbb{R}^{n+1}$ be a properly immersed translating soliton of the $r$-mean curvature flow, with velocity $V$ and operator $P_{r-1}$ positive semidefinite satisfying \eqref{HS1-hyp-1}. Suppose  $\Sigma^n\subset\mathcal{H}_W,$ where
     \[
     \mathcal{H}_{W}=\{X\in \mathbb{R}^{n+1}; \langle X, W\rangle\leq0,\ \langle V, W\rangle>0,\ \|W\|=1\}.
     \]
Let $c:=\lan V,W\ran>0$ and define $\psi:\Sigma^n\to\R$ by $\psi(x)=\lan X(x),W\ran$. By using \eqref{deriv-XV} and \eqref{Lr-XV} with $W$ in the place of $V,$ we have
\[
\n\psi = W^\top
\]
and
\[
L_{r-1}\psi = r\sigma_r\lan N,W\ran.
\]
Since, by hypothesis, $\psi\leq 0,$ by Theorem \ref{theo-max-princ-Lr}, p.\pageref{theo-max-princ-Lr}, there exists a sequence of points $\{x_k\}_k\subset\Sigma^n$ such that 
\begin{equation}\label{OY-1}
\lim_{k\to\infty}\psi(x)=\sup_{\Sigma^n}\psi\leq 0,
\end{equation}
\begin{equation}\label{OY-2}
\displaystyle\frac{1}{k}>\|\n\psi(x_k)\|^2=\|W(x_k)^\top\|^2=1-\lan W,N(x_k)\ran^2 \geq 0,
\end{equation}
and
\begin{equation}\label{OY-3}
\displaystyle\frac{1}{k}> L_{r-1}\psi(x_k)=r\sigma_r(x_k)\lan N(x_k),W\ran.
\end{equation}
Thus
\begin{equation}\label{limit-1}
   \lim_{k\to\infty}\lan W,N(x_k)\ran^2=1.
\end{equation}
Since, for any orthonormal frame $\{e_1,\ldots,e_n\}$ on $\Sigma^n,$ 
\[
W=\sum_{i=1}^{n} \langle W, e_i\rangle e_i + \langle W, N\rangle N,
\] 
we have, by \eqref{limit-1}, that
\[
\lim_{k\to\infty}\sum_{i=1}^n\lan W,e_i(x_k)\ran^2=0.
\]
Moreover, 
\[
\begin{aligned}
\langle V, W\rangle &=\left\langle \sum_{i=1}^{n}\langle V, e_i\rangle e_i+ \langle V, N\rangle N,\sum_{j=1}^{n}\langle W, e_j\rangle e_j + \langle W, N\rangle N\right\rangle\\
&=\sum_{i=1}^{n}\langle V, e_i\rangle \langle W, e_i\rangle + \langle V,N \rangle \langle W, N\rangle
\end{aligned}
\]
and the estimate $|\lan V,e_i(x_k)\ran|\leq \|V\|\|e_i(x_k)\|\leq 1,$ imply
\begin{equation}\label{limit-2}
\begin{aligned}
0&\geq \lim_{k\to\infty} L_{r-1}\psi(x_k)\\
&=r\lim_{k\to\infty}\lan N(x_k),V\ran\lan N(x_k),W\ran\\
&=r\left[\lan V,W\ran - \lim_{k\to\infty}\sum_{i=1}^n\lan V,e_i(x_k)\ran\lan e_i(x_k),W\ran\right]\\
&= r\lan V,W\ran =r\cdot c>0,
\end{aligned}
\end{equation}
which is a contradiction.
\end{proof}

\section{Proof of Theorem \ref{thm-HS-2}}
\begin{proof}[Proof of Theorem \ref{thm-HS-2}]
This proof is inspired by  Borbely's proof (see \cite{B}) that complete minimal  surfaces satisfying the Omori-Yau maximum principle cannot be in the intersection of two transversal vertical half-spaces.

Let $\Sigma^{n}\subset \mathbb{R}^{n+1}$ be a translating soliton of the $r$-mean curvature flow with velocity $V$ and $P_{r-1}\geq \ve I$, $\ve >0$. Suppose, in addition, that $\sigma_{r-1}$ satisfies \eqref{EquationCM}. We are going to show that $\Sigma^{n}$ cannot be contained in the intersection of two transversal vertical half-spaces. 

Let
\[
\HH_i:=\HH_{(B_i,W_i)}:=\{X\in\R^{n+1};\lan X-B_i,W_i\ran\geq0\},\quad i=1,2,
\]
be two transversal vertical halfspaces. In order to simplify the proof, we can choose a system of coordinates of $\R^{n+1}$ such that 
\[
V=E_{n+1}=(0,\ldots,0,1),
\] 
$B_i=(0,\ldots,0)$, $i=1,2$ and, by a rotation, we may consider
\[
W_1=(a,b,0,\ldots,0), \quad W_2=(a,-b,0,\ldots,0),
\]
where $a,b>0$ and $a^2+b^2=1.$ In this system of coordinates, we have that
\[
\PP_i:=\partial \HH_i = \{X\in\R^{n+1};\lan X,W_i\ran=0\},\quad i=1,2.
\]
Denote by $\PP_i(R):=\PP_i+RW_i=\{X\in\R^{n+1};\lan X,W_i\ran=R\}$  the hyperplanes parallel to $\PP_i$ and by $\LL(R)$ be the $(n-1)$-hyperplane $\LL(R)=\PP_1(R)\cap \PP_2(R).$ Since the coordinates of each hyperplane $\PP_i(R),$ $i=1,2,$ satisfy the equation 
\[
ax_1+(-1)^{i-1}bx_2=R,
\]
we have 
\[
\LL(R)=\left\{\left(\frac{R}{a},0,x_3,\ldots,x_{n+1}\right); (x_3,\ldots,x_{n+1})\in\R^{n-1}\right\}.
\]
Define 
\begin{equation}\label{dR}
    d_R(x):=\mbox{dist}_{\R^{n+1}}(x,\LL(R))=\sqrt{\left(x_1-\frac{R}{a}\right)^2+x_2^2},
\end{equation}
for $x=(x_1,x_2,\ldots,x_{n+1})\in\R^{n+1},$ and consider the cylindrical region
\[
\mathcal{D}(R)=\{x\in\R^{n+1}; d_R(x)\leq R\}.
\]

%\begin{figure}
%    \centering
%    \includegraphics[width=0.5\linewidth]{Figura-1-translating-Sr.png}
 %  
 %   \label{fig:1}
%\end{figure}

The cylindrical region $\DD(R)$ separates $(\HH_1\cap \HH_2)\setminus\DD(R)$ into two regions, one with $d_R(x)$ bounded (which we denote by $\VV_R$) and another where $d_R(x)$ is unbounded (see Figure \ref{fig:1}). 
\begin{figure}[ht]
\centering
\def\r{5}
	\begin{tikzpicture}[scale=0.8,>=stealth]
		\def\u{8}
		\def\v{5}
		\coordinate (p1) at (30:8cm);
		\coordinate (p2) at (-30:8cm);
		\fill[black!10] (p1) -- (0,0) -- (p2) -- cycle;
		\draw[fill=white] (4,0) circle (2cm) node[below right]{};
		\fill (4,0) circle (2pt);

		\node (k1) at (13.1:4.6cm) {$\mathcal{D}(R)$};		
		\node (k2) at (-13:4.4cm) {$\stackrel{\nwarrow}{{\rm radius}}\! R$};
		\node (k3) at (4:5cm) {$\mathcal{L}(R)$};
		\node[above] (k4) at (30:4cm) {$\mathcal{P}_2$};
		\node[below] (k5) at (-30:4cm) {$\mathcal{P}_1$};
		
		\path[draw,-{Stealth[length=2mm]}] (-1,0) -- (\u,0) node[pos=1, below] {$x_{1}$};
		\path[draw,-{Stealth[length=2mm]}] (0,-\v) -- (0,\v) node[pos=1, left] {$x_{2}$};
		
		\draw[->,shorten <=12pt,shorten >=3pt] (4:5cm)to[out=180,in=60] (4,0);
		
		\def\t{2}
		\coordinate (w1) at (60:1.6*\t cm);
		\coordinate (w2) at (-60:1.6*\t cm);
		
		\path[draw,-{Stealth[length=2mm]}] (60:-0.6*\t cm) -- (w1) node[pos=1, above] {$W_{1}$};
		\path[draw,-{Stealth[length=2mm]}] (-60:-0.6*\t cm) -- (w2) node[pos=1, below] {$W_{2}$};
		
		\draw[dashed,draw=black] (0,0) -- (0,0|-w2) node[pos=1, left] {$-b$};
		\draw[dashed] (0,0) -- (0,0|-w1) node[pos=1, left] {$b$};
		
		\draw (p1) -- (0,0) (p2) -- (0,0);
		
		\draw[dashed] (w1) -- (0,0|-w1) (w1) -- (w1|-0,0) node[below right] {};
        %\draw[dashed] (w1) -- (0,0|-w1) (w1) -- (w1|-0,0) node[below right] {$a$};

		\draw[dashed] (w2) -- (0,0|-w2) (w2) -- (w2|-0,0);
		
		\node[above] (t1) at (0.6*\t,0) {$\mathcal{V}_{R}$};
		
		\draw[dashed] (4,0) -- +(120:2cm) (4,0) -- +(240:2cm);

	\end{tikzpicture}
     \caption{Regions in the intersection of vertical halfspaces}
    \label{fig:1}
    
    \end{figure}
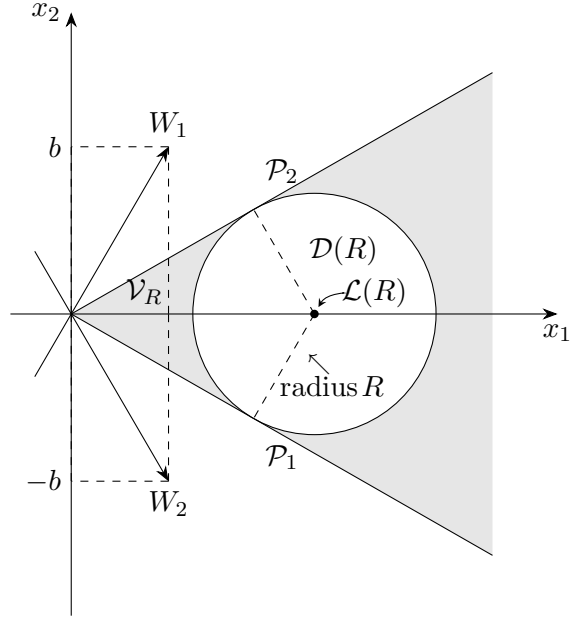

We assume, by contradiction, that there exists a complete, properly immersed, translating soliton of the $r$-mean curvature flow, with $P_{r-1}\geq \ve I,$ for $\ve>0$, such that $\sigma_{r-1}$ satisfies \eqref{HS2-hyp-2} and $\Sigma^n\subset\HH_1\cap \HH_2$. 

Since $\VV_R\to \HH_1\cap \HH_2$ when $R\to\infty,$ we can assume $R>0$ large enough such that $\Sigma^n\cap \VV_R\neq\emptyset.$ Let us denote by $\overline{\n}$ the gradient and connection of $\R^{n+1}$ and by $\overline{\hess}$ the hessian quadratic form of $\R^{n+1}.$ The following facts are straightforward calculations about the function $d(x):=d_R(x),$ for $x\in(\HH_1\cap \HH_2)\backslash\mathcal{L}(R):$

\begin{itemize}
    \item[(i)] $\overline{\hess}\ d(x)(\overline{\n}d,\overline{\n}d)=0$ which implies that $\overline{\n}d$ is an eigenvector of $\overline{\hess}\ d$ with eigenvalue zero;
    \item[(ii)] Since $d(x)$ does not depend on $x_3,\ldots,x_{n+1},$ the vectors $E_3,\ldots,E_{n+1}$ of the canonical basis of $\R^{n+1}$ are also eigenvectors of $\overline{\hess}\ d$ with eigenvalue zero;
    \item[(iii)] The last eigenvector of $\overline{\hess}\ d$ is given by
    \[
    \chi=\left(-\frac{\partial d}{\partial x_2},\frac{\partial d}{\partial x_1},\vec{\bf{0}}\right)=\left(-\frac{x_2}{d},\frac{x_1-R/a}{d},\vec{\bf{0}}\right),\quad \vec{\bf{0}}\in\R^{n-1}.
    \]
    For this eigenvector, the eigenvalue is $1/d.$ 
\end{itemize}
These facts imply that the set $\{\overline{\n}d,\chi,E_3,\ldots,E_{n+1}\}$ is an orthonormal frame of $\R^{n+1}$ given by eigenvectors of $\overline{\hess}\ d.$ This implies that any vector field $Y\in\R^{n+1}$ can be written as
\begin{equation}\label{Y-Eq}
Y=\lan Y,\overline{\n}d\ran\overline{\n}d+\lan Y,\chi\ran\chi + \sum_{j=3}^{n+1}\lan Y,E_j\ran E_j.
\end{equation}
Moreover, for any $Y,Z\in\R^{n+1},$
\begin{equation}\label{hess-d-1}
    \overline{\hess}\ d(Y,Z)=\frac{1}{d}\lan Y,\chi\ran\lan Z,\chi\ran.
\end{equation}
Now, let $f:\Sigma^n\to\R$ be defined by
\[
f(x)=\left\{
\begin{aligned}
    d_R(x),&\quad \mbox{if}\quad x\in\Sigma^n\cap\VV_R,\\
    R,&\quad \mbox{if}\quad x\in\Sigma^n\backslash\VV_R.
\end{aligned}
\right.
\]
Notice that $f$ is continuous and differentiable everywhere, except for points of $\Sigma^n\cap(\partial\VV_R\cap\partial\DD(R)).$ Moreover $R\leq f(x)<R/a$ (since $0<a<1$) and
\[
R<\sup_{\Sigma^n} f \leq R/a<\infty,
    \]
since $\Sigma^n\cap\VV_R\neq\emptyset$ and $f(x)>R$ for $x\in\Sigma^n\cap\VV_R$ (see Figure \ref{fig:1}). The following calculations will be carried out at points $x \in \Sigma^n \cap \VV_R$ where the function $d_R$ is differentiable. Since
\begin{equation}\label{grad-f-1}
    \n f = (\overline{\n}d)^\top=\overline{\n}d-(\overline{\n}d)^\perp = \overline{\n}d - \lan\overline{\n}d,N\ran N
\end{equation}
and, by replacing $N$ in the place of $Y$ in \eqref{Y-Eq},
\[
\begin{aligned}
1&=\lan\chi,N\ran^2+\lan\overline{\n}d,N\ran^2 + \sum_{j=3}^{n+1}\lan E_j,N\ran^2\\
&\geq \lan\chi,N\ran^2+\lan\overline{\n}d,N\ran^2,
\end{aligned}
\]
we have
\begin{equation}\label{grad-f-2}
    \|\n f\|=\sqrt{\|\overline{\n}d\|^2-\lan\overline{\n}d,N\ran^2}=\sqrt{1-\lan\overline{\n}d,N\ran^2}\geq|\lan\chi,N\ran|,
\end{equation}
provided $\|\overline{\n}d\|=1.$

If $\{e_1,e_2,\ldots,e_n\}$ is an orthonormal frame of $\Sigma^n,$ given by eigenvectors of the second fundamental form, then 
\begin{equation}\label{hess-d-2}
\begin{aligned}
\sum_{i=1}^n\overline{\hess}\ d(e_i,P_{r-1}(e_i)) &= \frac{1}{d}\sum_{i=1}^n\lan e_i,\chi\ran\lan P_{r-1}(e_i),\chi\ran\\
&=\frac{1}{d}\sum_{i=1}^n\lambda_i\lan e_i,\chi\ran^2,\\
\end{aligned}
\end{equation}
where $\lambda_i$ is the eigenvalue of $P_{r-1}$ relative to the eigenvector $e_i,$ $i=1,\ldots,n.$ Since, by hypothesis, $\lambda_i\geq \ve>0$ and
\[
\sum_{i=1}^n\lan e_i,\chi\ran^2 = 1-\lan N,\chi\ran^2,
\]
we have
\begin{equation}\label{hess-d-3}
\sum_{i=1}^n\overline{\hess}\ d(e_i,P_{r-1}(e_i)) \geq \ve\left(\frac{1-\lan N,\chi\ran^2}{d}\right).
\end{equation}
On the other hand, for any $Y,Z\in T\Sigma^n,$ and for points $x\in\Sigma^n\cap\VV_R,$ 
\[
\begin{aligned}
\hess\ f (Y,Z)=\overline{\hess}\ d(Y,Z) + \lan \overline{\n}d,N\ran\lan A(X),Y\ran, 
\end{aligned}
\]
where $A$ is the shape operator of $\Sigma^n.$
This gives, for any orthonormal frame $\{e_1,e_2,\ldots,e_n\}$ of $T\Sigma^n,$
\begin{equation}\label{eq-Lr-trans-1}
   \begin{aligned}
    L_{r-1}f&=\tr(\hesss f\circ P_{r-1})=\sum_{i=1}^n\hess f(e_i,P_{r-1}(e_i))\\
    &=\sum_{i=1}^n\overline{\hess}\ d(e_i,P_{r-1}(e_i))+\lan \overline{\n}d,N\ran\sum_{i=1}^n\lan A(e_i),P_{r-1}(e_i)\ran\\
    &=\sum_{i=1}^n\overline{\hess}\ d(e_i,P_{r-1}(e_i))+r\sigma_r\lan \overline{\n}d,N\ran\\
    &=\sum_{i=1}^n\overline{\hess}\ d(e_i,P_{r-1}(e_i))+r\lan E_{n+1},N\ran\lan \overline{\n}d,N\ran,\\
    \end{aligned}
\end{equation}
where we used that 
\[
\sum_{i=1}^n\lan A(e_i),P_{r-1}(e_i)\ran=\tr (AP_{r-1})=r\sigma_r
\]
and that $\sigma_r=\lan E_{n+1},N\ran.$ Combining \eqref{eq-Lr-trans-1} with \eqref{hess-d-3}, we obtain
\begin{equation}\label{eq-Lr-trans-2}
L_{r-1}f \geq \ve\left(\frac{1-\lan\chi,N\ran^2}{d}\right) + r\lan E_{n+1},N\ran\lan \overline{\n}d,N\ran.
\end{equation}
Since $f$ is bounded, $f(x)=R$ for $x\in\Sigma^n\backslash\VV_R$ and $\sup_{\Sigma^n} f>R,$ by Theorem \ref{theo-max-princ-Lr}, p.\pageref{theo-max-princ-Lr}, there exists a sequence $\{x_k\}_k$ of points in $\Sigma^n\cap\VV_R$ such that
\begin{equation}\label{Omori-Yau-Trans-1}    \lim_{k\to\infty}f(x_k)=\sup_{\Sigma^n} f, \lim_{k\to\infty}\|\n f(x_k)\|=0 \quad \mbox{and}\quad \limsup_{k\to\infty} L_{r-1}f(x_k)\leq 0.
\end{equation}
Now, let us analyze the last term of \eqref{eq-Lr-trans-2}. First notice that, since 
\[
\lan E_{n+1},\overline{\n}d\ran=0,
\] 
we have
\begin{equation}\label{EN}
\begin{aligned}
\lan E_{n+1},N\ran^2 &= \lan E_{n+1},N\pm\overline{\n}d\ran^2\\
&\leq\|N\pm\overline{\n}d\|^2\\
&=2(1\pm\lan N,\overline{\n}d\ran).  
\end{aligned}
\end{equation}
Using \eqref{grad-f-2} and the second limit in \eqref{Omori-Yau-Trans-1}, we obtain that
\begin{equation}\label{limit-Nd}
    \lim_{k\to\infty}\lan N(x_k),\overline{\n}d(x_k)\ran^2=1.
\end{equation}
It means that there exists at least one subsequence (which we also denote by $\{x_k\}_k$ to simplify the notation) such that
\[
\lim_{k\to\infty}\lan N(x_k),\overline{\n}d(x_k)\ran=1 \quad \mbox{or}\quad \lim_{k\to\infty}\lan N(x_k),\overline{\n}d(x_k)\ran=-1.
\]
In both cases, we can use \eqref{EN} to obtain 
\begin{equation}\label{limit-EN}
    \lim_{k\to\infty}\lan E_{n+1},N(x_k)\ran=0.
\end{equation}
This implies
\begin{equation}\label{limit-EN-2}
    \lim_{k\to\infty}\lan E_{n+1},N(x_k)\ran\lan N(x_k),\overline{\n}d(x_k)\ran=0.
\end{equation}
On the other hand, by \eqref{grad-f-2} and \eqref{Omori-Yau-Trans-1}, we have
\begin{equation}\label{lim-chi}
\lim_{k\to\infty}\lan N(x_k),\chi(x_k)\ran=0. 
\end{equation}
Thus, by using \eqref{eq-Lr-trans-2}, \eqref{Omori-Yau-Trans-1}, and \eqref{lim-chi}, we have
\[
\begin{aligned}
0&\geq \limsup_{k\to\infty} L_{r-1}f(x_k)\\
&\geq\ve\limsup_{k\to\infty}\left(\frac{1-\lan N(x_k),\chi(x_k)\ran^2}{d(x_k)}\right)\\
&\quad +r\limsup_{k\to\infty}\lan E_{n+1},N(x_k)\ran\lan N(x_k),\overline{\n}d(x_k)\ran\\
&\geq \frac{\ve a}{R}>0,
\end{aligned}
\]
which is a contradiction. Here we used that $d(x)\leq R/a.$ Thus, there is no such translating soliton.
\end{proof}

\section*{Acknowledgments}
The authors were partially supported by the National Council for Scientific and Technological Development - CNPq of Brazil (Grant number 303118/2022-9 to H. Alencar, 303057/2018-1, 402563/2023-9 to G. P. Bessa and 306189/2023-2 to G. Silva Neto).
\begin{bibdiv}
\begin{biblist}

%\bib{Alencar-Colares}{article}{
%   author={Alencar, Hilario},
%   author={Colares, A. Gervasio},
%   title={Integral formulas for the $r$-mean curvature linearized operator
%   of a hypersurface},
%   journal={Ann. Global Anal. Geom.},
%   volume={16},
%   date={1998},
%   number={3},
%   pages={203--220},
%   issn={0232-704X},
%   review={\MR{1626663}},
%   doi={10.1023/A:1006555603714},
%}

\bib{ABS-2023}{article}{
 author={Alencar, Hil{\'a}rio},
 author={Batista, M{\'a}rcio},
 author={Silva Neto, Greg{\'o}rio},
 issn={0022-0396},
 issn={1090-2732},
 doi={10.1016/j.jde.2023.05.045},
 review={Zbl 1521.53029},
 title={Poincar{\'e} type inequality for hypersurfaces and rigidity results},
 journal={Journal of Differential Equations},
 volume={369},
 pages={156--179},
 date={2023},
 publisher={Elsevier (Academic Press), San Diego, CA},
}

\bib{ABS-2025}{article}{
 author={Alencar, Hil{\'a}rio},
 author={Bessa, G. Pacelli},
 author={Silva Neto, Greg{\'o}rio},
 issn={0022-1236},
 issn={1096-0783},
 doi={10.1016/j.jfa.2025.110920},
 review={Zbl 08017507},
 title={Gap theorems for complete self-shrinkers of {{\(r\)}}-mean curvature flows},
 journal={Journal of Functional Analysis},
 volume={289},
 number={1},
 pages={21},
 note={Id/No 110920},
 date={2025},
 publisher={Elsevier, Amsterdam},
}

\bib{AdCE-2003}{article}{
 author={Alencar, Hil{\'a}rio},
 author={do Carmo, Manfredo},
 author={Elbert, Maria Fernanda},
 issn={0075-4102},
 issn={1435-5345},
 doi={10.1515/crll.2003.006},
 review={Zbl 1093.53063},
 title={Stability of hypersurfaces with vanishing {{\(r\)}}-mean curvatures in Euclidean spaces},
 journal={Journal f{\"u}r die Reine und Angewandte Mathematik},
 volume={554},
 pages={201--216},
 date={2003},
 publisher={De Gruyter, Berlin},
}

\bib{AdCR-1993}{article}{
 author={Alencar, Hil{\'a}rio},
 author={do Carmo, Manfredo P.},
 author={Rosenberg, Harold},
 issn={0232-704X},
 issn={1572-9060},
 doi={10.1007/BF00773553},
 review={Zbl 0816.53031},
 title={On the first eigenvalue of the linearized operator of the {{\(r\)}}-th mean curvature of a hypersurface},
 journal={Annals of Global Analysis and Geometry},
 volume={11},
 number={4},
 pages={387--395},
 date={1993},
 publisher={Springer Netherlands, Dordrecht},
}

\bib{AS2010}{article}{
author={Alessandroni, Roberta},
author={Sinestrari, Carlo},
title={Evolution of hypersurfaces by powers of the scalar curvature},
journal={Ann. Sc. Norm. Super. Pisa Cl. Sci. (5)},
volume={9},
date={2010},
number={3},
pages={541--571},
issn={0391-173X},
review={\MR{2722655}},
}

\bib{AIR-2013}{article}{
 author={Al{\'{\i}}as, Luis J.},
 author={Impera, Debora},
 author={Rigoli, Marco},
 issn={0002-9947},
 issn={1088-6850},
 doi={10.1090/S0002-9947-2012-05774-6},
 review={Zbl 1276.53064},
 title={Hypersurfaces of constant higher order mean curvature in warped products},
 journal={Transactions of the American Mathematical Society},
 volume={365},
 number={2},
 pages={591--621},
 date={2013},
 publisher={American Mathematical Society (AMS), Providence, RI},
}

\bib{AMC2012}{article}{
author={Andrews, Ben},
author={McCoy, James},
title={Convex hypersurfaces with pinched principal curvatures and flow of
convex hypersurfaces by high powers of curvature},
journal={Trans. Amer. Math. Soc.},
volume={364},
date={2012},
number={7},
pages={3427--3447},
issn={0002-9947},
review={\MR{2901219}},
doi={10.1090/S0002-9947-2012-05375-X},
}

\bib{AW2021}{article}{
author={Andrews, Ben},
author={Wei, Yong},
title={Volume preserving flow by powers of the $k$-th mean curvature},
journal={J. Differential Geom.},
volume={117},
date={2021},
number={2},
pages={193--222},
issn={0022-040X},
review={\MR{4214340}},
doi={10.4310/jdg/1612975015},
}

\bib{BC}{article}{
 author={Barbosa, Jo{\~a}o Lucas Marques},
 author={Colares, Ant{\^o}nio Gervasio},
 issn={0232-704X},
 issn={1572-9060},
 doi={10.1023/A:1006514303828},
 review={Zbl 0891.53044},
 title={Stability of hypersurfaces with constant {{\(r\)}}-mean curvature},
 journal={Annals of Global Analysis and Geometry},
 volume={15},
 number={3},
 pages={277--297},
 date={1997},
 publisher={Springer Netherlands, Dordrecht},
}

\bib{BX-2024}{article}{
 author={Batista, M{\'a}rcio},
 author={Xavier, Wagner},
 issn={0219-1997},
 issn={1793-6683},
 doi={10.1142/S0219199723500232},
 review={Zbl 1542.53100},
 title={On the rigidity of self-shrinkers of the {{\(r\)}}-mean curvature flow},
 journal={Communications in Contemporary Mathematics},
 volume={26},
 number={6},
 pages={15},
 note={Id/No 2350023},
 date={2024},
 publisher={World Scientific, Singapore},
}

\bib{BS2018}{article}{
author={Bertini, Maria Chiara},
author={Sinestrari, Carlo},
title={Volume preserving flow by powers of symmetric polynomials in the
principal curvatures},
journal={Math. Z.},
volume={289},
date={2018},
number={3-4},
pages={1219--1236},
issn={0025-5874},
review={\MR{3830246}},
doi={10.1007/s00209-017-1995-8},
}

\bib{BP}{article}{
 author={Bessa, G. Pacelli},
 author={Pessoa, Leandro F.},
 issn={1678-7544},
 issn={1678-7714},
 doi={10.1007/s00574-014-0047-9},
 review={Zbl 1297.53038},
 title={Maximum principle for semi-elliptic trace operators and geometric applications},
 journal={Bulletin of the Brazilian Mathematical Society. New Series},
 volume={45},
 number={2},
 pages={243--265},
 date={2014},
 publisher={Springer, Berlin/Heidelberg; Sociedade Brasileira de Matem{\'a}tica (SBM), Rio de Janeiro},
}

\bib{BMR-2013}{book}{
 author={Bianchini, Bruno},
 author={Mari, Luciano},
 author={Rigoli, Marco},
 isbn={978-0-8218-8799-8},
 isbn={978-1-4704-1056-8},
 issn={0065-9266},
 issn={1947-6221},
 book={
 title={On some aspects of oscillation theory and geometry},
 publisher={Providence, RI: American Mathematical Society (AMS)},
 },
 doi={10.1090/S0065-9266-2012-00681-2},
 review={Zbl 1295.34047},
 title={On some aspects of oscillation theory and geometry},
 series={Memoirs of the American Mathematical Society},
 volume={1056},
 pages={v + 195},
 date={2013},
 publisher={American Mathematical Society (AMS), Providence, RI},
}

\bib{B}{article}{
 author={Borb{\'e}ly, Albert},
 issn={0004-9727},
 doi={10.1017/S0004972711002346},
 review={Zbl 1220.53071},
 title={On minimal surfaces satisfying the Omori-Yau principle},
 journal={Bulletin of the Australian Mathematical Society},
 volume={84},
 number={1},
 pages={33--39},
 date={2011},
 publisher={Cambridge University Press, Cambridge},
}

\bib{CRS2010}{article}{
author={Cabezas-Rivas, Esther},
author={Sinestrari, Carlo},
title={Volume-preserving flow by powers of the $m$th mean curvature},
journal={Calc. Var. Partial Differential Equations},
volume={38},
date={2010},
number={3-4},
pages={441--469},
issn={0944-2669},
review={\MR{2647128}},
doi={10.1007/s00526-009-0294-6},
}

\bib{CY-1977}{article}{
 author={Cheng, Shiu-Yuen},
 author={Yau, Shing-Tung},
 issn={0025-5831},
 issn={1432-1807},
 doi={10.1007/BF01425237},
 review={Zbl 0349.53041},
 title={Hypersurfaces with constant scalar curvature},
 journal={Mathematische Annalen},
 volume={225},
 pages={195--204},
 date={1977},
 publisher={Springer, Berlin/Heidelberg},
 eprint={https://eudml.org/doc/162932},
}

\bib{CM}{article}{
 author={Chini, Francesco},
 author={M{\o}ller, Niels Martin},
 issn={1073-7928},
 issn={1687-0247},
 doi={10.1093/imrn/rnz183},
 review={Zbl 1506.53095},
 title={Bi-halfspace and convex hull theorems for translating solitons},
 journal={IMRN. International Mathematics Research Notices},
 volume={2021},
 number={17},
 pages={13011--13045},
 date={2021},
 publisher={Oxford University Press, Cary, NC},
}

\bib{Chow1987}{article}{
author={Chow, Bennett},
title={Deforming convex hypersurfaces by the square root of the scalar
curvature},
journal={Invent. Math.},
volume={87},
date={1987},
number={1},
pages={63--82},
issn={0020-9910},
review={\MR{0862712}},
doi={10.1007/BF01389153},
}

\bib{CSS}{article}{
 author={Clutterbuck, Julie},
 author={Schn{\"u}rer, Oliver C.},
 author={Schulze, Felix},
 issn={0944-2669},
 issn={1432-0835},
 doi={10.1007/s00526-006-0033-1},
 review={Zbl 1120.53041},
 title={Stability of translating solutions to mean curvature flow},
 journal={Calculus of Variations and Partial Differential Equations},
 volume={29},
 number={3},
 pages={281--293},
 date={2007},
 publisher={Springer, Berlin/Heidelberg},
}

\bib{GLM2018}{article}{
author={Gao, Shanze},
author={Li, Haizhong},
author={Ma, Hui},
title={Uniqueness of closed self-similar solutions to
$\sigma_k^{\alpha}$-curvature flow},
journal={NoDEA Nonlinear Differential Equations Appl.},
volume={25},
date={2018},
number={5},
pages={Paper No. 45, 26},
issn={1021-9722},
review={\MR{3845754}},
doi={10.1007/s00030-018-0535-5},
}

\bib{GLW2017}{article}{
author={Guo, Shunzi},
author={Li, Guanghan},
author={Wu, Chuanxi},
title={Volume-preserving flow by powers of the $m$-th mean curvature in
the hyperbolic space},
journal={Comm. Anal. Geom.},
volume={25},
date={2017},
number={2},
pages={321--372},
issn={1019-8385},
review={\MR{3690244}},
doi={10.4310/CAG.2017.v25.n2.a3},
}

\bib{HIMW}{article}{
 author={Hoffman, David},
 author={Ilmanen, Tom},
 author={Mart{\'{\i}}n, Francisco},
 author={White, Brian},
 isbn={978-3-030-68540-9},
 isbn={978-3-030-68543-0},
 isbn={978-3-030-68541-6},
 book={
 title={Minimal surfaces: integrable systems and visualisation. M:iv workshops, 2016--19},
 publisher={Cham: Springer},
 },
 doi={10.1007/978-3-030-68541-6\_9},
 review={Zbl 1484.53009},
 title={Notes on translating solitons for mean curvature flow},
 pages={147--168},
 date={2021},
}

\bib{HL-1995}{article}{
 author={Hounie, Jorge},
 author={Leite, Maria Luiza},
 issn={0022-040X},
 issn={1945-743X},
 doi={10.4310/jdg/1214456216},
 review={Zbl 0821.53007},
 title={The maximum principle for hypersurfaces with vanishing curvature functions},
 journal={Journal of Differential Geometry},
 volume={41},
 number={2},
 pages={247--258},
 date={1995},
 publisher={International Press of Boston, Somerville, MA},
}

\bib{IMR-2011}{article}{
 author={Impera, Debora},
 author={Mari, Luciano},
 author={Rigoli, Marco},
 issn={0002-9939},
 issn={1088-6826},
 doi={10.1090/S0002-9939-2010-10649-4},
 review={Zbl 1220.53073},
 title={Some geometric properties of hypersurfaces with constant {{\(r\)}}-mean curvature in Euclidean space},
 journal={Proceedings of the American Mathematical Society},
 volume={139},
 number={6},
 pages={2207--2215},
 date={2011},
 publisher={American Mathematical Society (AMS), Providence, RI},
}

\bib{KP}{article}{
 author={Kim, Daehwan},
 author={Pyo, Juncheol},
 issn={2330-1511},
 doi={10.1090/bproc/67},
 review={Zbl 1472.53075},
 title={Half-space type theorem for translating solitons of the mean curvature flow in Euclidean space},
 journal={Proceedings of the American Mathematical Society. Series B},
 volume={8},
 pages={1--10},
 date={2021},
 publisher={American Mathematical Society (AMS), Providence, RI},
}

\bib{LWW2021}{article}{
author={Li, Haizhong},
author={Wang, Xianfeng},
author={Wu, Jing},
title={Contracting axially symmetric hypersurfaces by powers of the
$\sigma_k$-curvature},
journal={J. Geom. Anal.},
volume={31},
date={2021},
number={3},
pages={2656--2702},
issn={1050-6926},
review={\MR{4225822}},
doi={10.1007/s12220-020-00370-w},
}

\bib{LSW2020}{article}{
author={Li, Qi-Rui},
author={Sheng, Weimin},
author={Wang, Xu-Jia},
title={Asymptotic convergence for a class of fully nonlinear curvature
flows},
journal={J. Geom. Anal.},
volume={30},
date={2020},
number={1},
pages={834--860},
issn={1050-6926},
review={\MR{4058539}},
doi={10.1007/s12220-019-00169-4},
}

\bib{LP}{article}{
 author={de Lima, Ronaldo F.},
 author={Pipoli, Giuseppe},
 issn={1050-6926},
 issn={1559-002X},
 doi={10.1007/s12220-025-01925-5},
 review={Zbl 07990063},
 title={Translators to higher order mean curvature flows in {{\(\mathbb{R}^n \times \mathbb{R}\)}} and {{\(\mathbb{H}^n \times \mathbb{R}\)}}},
 journal={The Journal of Geometric Analysis},
 volume={35},
 number={3},
 pages={50},
 note={Id/No 92},
 date={2025},
 publisher={Springer US, New York, NY; Mathematica Josephina, St. Louis, MO},
}

\bib{prs}{book}{
 author={Pigola, Stefano},
 author={Rigoli, Marco},
 author={Setti, Alberto G.},
 isbn={978-0-8218-3639-2},
 isbn={978-1-4704-0423-9},
 issn={0065-9266},
 issn={1947-6221},
 book={
 title={Maximum principles on Riemannian manifolds and applications},
 publisher={Providence, RI: American Mathematical Society (AMS)},
 },
 doi={10.1090/memo/0822},
 review={Zbl 1075.58017},
 title={Maximum principles on Riemannian manifolds and applications},
 series={Memoirs of the American Mathematical Society},
 volume={822},
 pages={99},
 date={2005},
 publisher={American Mathematical Society (AMS), Providence, RI},
}

\bib{Reilly}{article}{
   author={Reilly, Robert C.},
   title={Variational properties of functions of the mean curvatures for
   hypersurfaces in space forms},
   journal={J. Differential Geometry},
   volume={8},
   date={1973},
   pages={465--477},
   issn={0022-040X},
   review={\MR{0341351}},
}

\bib{R-1993}{article}{
 author={Rosenberg, Harold},
 issn={0007-4497},
 review={Zbl 0787.53046},
 title={Hypersurfaces of constant curvature in space forms},
 journal={Bulletin des Sciences Math{\'e}matiques. Deuxi{\`e}me S{\'e}rie},
 volume={117},
 number={2},
 pages={211--239},
 date={1993},
 publisher={Gauthier-Villars, Paris},
}

\bib{Urbas1990}{article}{
author={Urbas, John I. E.},
title={On the expansion of starshaped hypersurfaces by symmetric
functions of their principal curvatures},
journal={Math. Z.},
volume={205},
date={1990},
number={3},
pages={355--372},
issn={0025-5874},
review={\MR{1082861}},
doi={10.1007/BF02571249},
}

\bib{Urbas1999}{article}{
author={Urbas, John},
title={Convex curves moving homothetically by negative powers of their
curvature},
journal={Asian J. Math.},
volume={3},
date={1999},
number={3},
pages={635--656},
issn={1093-6106},
review={\MR{1793674}},
doi={10.4310/AJM.1999.v3.n3.a4},
}

\bib{W-1985}{article}{
 author={Walter, Rolf},
 issn={0025-5831},
 issn={1432-1807},
 doi={10.1007/BF01455537},
 review={Zbl 0536.53054},
 title={Compact hypersurfaces with a constant higher mean curvature function},
 journal={Mathematische Annalen},
 volume={270},
 pages={125--145},
 date={1985},
 publisher={Springer, Berlin/Heidelberg},
 eprint={https://eudml.org/doc/182950},
}

\bib{Z2013}{article}{
author={Zhao, Liang},
title={The first eigenvalue of $p$-Laplace operator under powers of the
$m$th mean curvature flow},
journal={Results Math.},
volume={63},
date={2013},
number={3-4},
pages={937--948},
issn={1422-6383},
review={\MR{3057347}},
doi={10.1007/s00025-012-0242-1},
}

\end{biblist}
\end{bibdiv}

\end{document}